\documentclass[11pt]{amsart}
\usepackage{epsf,latexsym,graphicx}
\usepackage{amssymb}
\usepackage{enumerate}

\newtheorem{theorem}{Theorem}[section]
\newtheorem{lemma}[theorem]{Lemma}
\newtheorem{corollary}[theorem]{Corollary}

\newtheorem{proposition}[theorem]{Proposition}
\theoremstyle{definition}
\newtheorem{definition}[theorem]{Definition}
\newtheorem{example}[theorem]{Example}
\theoremstyle{remark}
\newtheorem{rem}[theorem]{Remark}
\newtheorem*{theorem*}{Theorem}
\newtheorem*{corollary*}{Corollary}
\newtheorem*{ack}{Acknowledgements}

\newcommand\cal[1]{\mathcal {#1}}
\newcommand\pn[1]{{\mathbb P}^{#1}}

\newcommand\zed{{\mathbb Z}}
\newcommand\nat{{\mathbb N}}
\newcommand\rat{{\mathbb Q}}
\newcommand\real{{\mathbb R}}
\newcommand\comp{{\mathbb C}}

\DeclareMathOperator{\codeg}{codeg}
\DeclareMathOperator{\Cayley}{Cayley}
\DeclareMathOperator{\Conv}{Conv}

\title[Classifying smooth lattice polytopes]{Classifying smooth lattice
polytopes \\
via toric fibrations}
\author[A. Dickenstein, S. Di Rocco, R. Piene]{Alicia Dickenstein, Sandra Di
Rocco, Ragni Piene}
\thanks{AD was 
partially supported by UBACYT X042 and X064, CONICET PIP 5617 and ANPCyT PICT
20569, Argentina.}
\thanks{SDR was partially supported by  Vetenskapsr{\aa}det's grant
NT:2006-3539, and the CIAM (Center of industrial and applied mathematics).}
\address{Departamento de Matem\'atica, FCEN, Universidad de Buenos Aires, Ciudad
Universitaria - Pab. I, (1428) Buenos Aires, Argentina}
\email{alidick@dm.uba.ar}
\address{Department of Mathematics, KTH, SE-10044 Stockholm, Sweden}
\email{dirocco@math.kth.se}
\address{CMA/Department of Mathematics, University of Oslo, P. O. Box 1053
Blindern, NO-0316 Oslo, Norway}
\email{ragnip@math.uio.no}
\date{\today }

\begin{document}

\begin{abstract} We show that any smooth $\rat$-normal lattice polytope $P$ of
dimension $n$ and degree $d$ is a strict Cayley polytope if $n \geq 2d+1$. This
gives  a sharp answer, for this class of polytopes,  to a question raised by
V.~V. Batyrev and B. Nill. 
\end{abstract}

%\begin{keyword}
%{lattice polytope, toric variety, toric fibration, Cayley polytope, nef value}
%\end{keyword}

\maketitle

\section{Introduction} \label{sec:intro}

Let $P$ be an $n$-dimensional \emph{lattice polytope} (i.e., a convex polytope with integer vertices) in $\real^n$. We represent it as
an intersection of half spaces 
\begin{equation*} P \, = \, \cap_{i=1}^r H_{\rho_i, -a_i}^+,
\end{equation*}  
where
$H_{\rho_i, -a_i}^+ = \{ x \in \real^n \,|\, \langle \rho_i, x \rangle \geq
-a_i\}$ are the half spaces,
 $H_{\rho_i, -a_i} = \{ x \in \real^n \,|\, \langle \rho_i, x \rangle =
-a_i\}$  the supporting hyperplanes, $\rho_i$  the corresponding primitive
inner normals, and $a_i \in \zed,  \, i =1, \dots, r$. Recall that an
$n$-dimensional lattice polytope  $P$ is {\em smooth } if every vertex is equal
to the intersection of $n$ of the hyperplanes $H_{\rho_i, -a_i}$, and if the
corresponding $n$ normal vectors $\rho_i$ form a lattice basis for
$\zed^n\subset \real^n$. Smooth polytopes are sometimes called Delzant polytopes
or regular  polytopes.

\begin{definition} Let $P=\cap_{i=1}^r H_{\rho_i, -a_i}^+\subset\real^n$ be an
$n$-dimensional lattice polytope. Define $P^{(s)}:=\cap_{i=1}^r H_{\rho_i, -a_i+s}^+,$
for $s\geq 1$.
\end{definition}

Note that the lattice points of 
$P^{(1)}$ are precisely the interior lattice points of $P$. 

\begin{definition} Let $P$ be a smooth  $n$-dimensional lattice polytope and
$s\geq 1$ an integer.  Let $m$ be a vertex of $P$. Reorder the hyperplanes so
that $\{m\}=\cap_{i=1}^n  H_{\rho_i, -a_i}$. We say that $P$ is  {\em $s$-spanned at
$m$} if  the lattice point $m(s)$, defined by $\{m(s)\}=\cap_{i=1}^n  H_{\rho_i, -a_i+s}$, lies in
$P^{(s)}$. We say that $P$ is \emph{$s$-spanned} if $P$ is  $s$-spanned at every
vertex.
\end{definition}

If $\{m\}=\cap_1^n  H_{\rho_i, -a_i}$, we can write $m=(-a_1,\ldots,-a_n)$ in the 
dual basis of $\rho_1,\ldots,\rho_n$, and similarly $m(s)=(-a_1+s,\ldots,-a_n+s)$.
It follows from the definition that if $P$ is  $s$-spanned, then $P^{(s)}\cap
\zed^n \neq \emptyset$.

\begin{example}\label{blowup} Let $P$ be the smooth polytope obtained from the simplex
$d\Delta_3$ by removing the simplex $\Delta_3=\Conv\{(0:0:0),
(1:0:0),(0:1:0),(0:0:1)\}$, see Figure~1. Assume $d\ge 4$. Then
$P^{(1)}\ne\emptyset$, but $P$ is not  $1$-spanned. In fact, consider the vertex $m$ of $P$, given by
$\{m\}=\{(1:0:0)\}=(y=0)\cap(z=0)\cap(x+y+z=1)$, then the lattice point $m(1)$, given by
$\{m(1)\}=\{(0:1:1)\}=(y=1)\cap(z=1)\cap(x+y+z=2)$, is not a point in $P^{(1)}$.
Similarly for the vertices $(0:1:0)$ and $(0:0:1)$.

Note that if we instead remove $2\Delta_3$, then the resulting polytope is 
$1$-spanned. In this case, the vertices $(2:0:0)$, $(0:2:0)$, and $(0:0:2)$ all
``go'' to the same lattice point 
$(1:1:1)$, which is an interior point of the polytope.

\begin{figure}\label{fig:1}
\begin{center}
 {}{}\scalebox{0.40}{\includegraphics{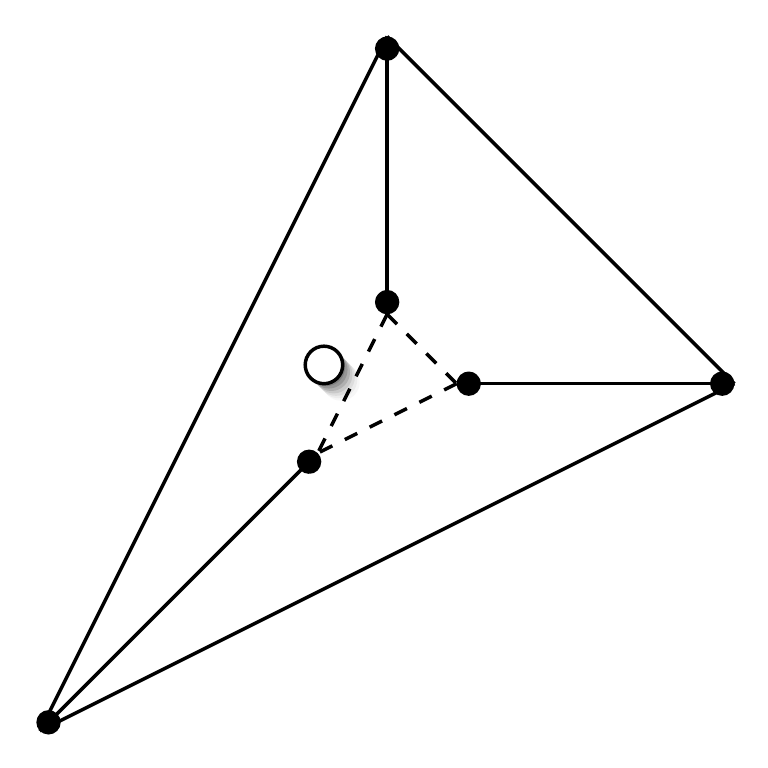}} 
\caption{}\label{Blowup}
 \end{center}
 \end{figure}
\end{example}

We recall the definitions of degree and codegree of a lattice polytope
introduced in   \cite{Ba-Ni}.

\begin{definition}\label{codegree} Let $P\subset\real^n$ be an $n$-dimensional
lattice polytope.   The {\em codegree} of $P$  is the natural number $$
\textstyle\codeg(P):=\min_{\nat}\{ k\, |\, (kP)^{(1)} \cap \zed^n
\neq\emptyset\}.$$
The {\em degree} of $P$ is $$
\deg(P):=n+1-\codeg(P).$$
\end{definition}

We further introduce a more  ``refined" notion of codegree.

\begin{definition}\label{qcodegree} Let $P\subset\real^n$ be an $n$-dimensional
lattice polytope. The  $\rat$-\emph{codegree} of $P$ is defined as $$
\textstyle\codeg_\rat(P):=\inf_{\rat}\{ \frac{a}{b}\, |\,
(aP)^{(b)}\neq\emptyset\}.$$
\end{definition}

The number $\codeg_\rat(P)$ is well defined, since $m(aP^{(b)})=(maP)^{(mb)}$
and, for any polytope $P'$, we have  $P'\neq\emptyset$ if and only if
$mP'\neq\emptyset$ for every integer $m\geq 1$. Moreover, it is clear that
${\codeg_\rat(P)}\leq\codeg(P)$ holds.

\begin{example}\label{simplex} Let $\Delta_n$ denote the $n$-dimensional
simplex. Then we have $\codeg_{\rat}(2\Delta_n)=\frac{n+1}{2}$ and
$\codeg(2\Delta_n)=\lceil{\frac{n+1}{2}}\rceil$.
\end{example}

As we shall see in Proposition~\ref{nefvalue}, the following  definition
embodies the {\em polytope version} of the notion of {\em nef value} for
projective varieties.

\begin{definition} Let $P\subset\real^n$ be an $n$-dimensional smooth lattice
polytope. The {\it nef value} of $P$ is 
$$
\textstyle \tau(P)=\inf_\rat\{\frac{a}{b} \, |\, aP\text{  is  $b$-spanned}\}.$$
\end{definition}

\begin{rem}\label{relation} Clearly, the inequality
 $\tau(P)\geq\codeg_\rat(P)$ always holds. It can be strict, as in Example
\ref{blowup}, where $\codeg(P)=\codeg_\rat(P)=1$ and $\tau(P)=2$.

 Observe also that when $\tau(P)$ is an integer, it follows from Lemma \ref{lem:key} below that $\tau(P)\ge \codeg(P)$.
\end{rem}

\begin{definition} \label{Q-normal} An $n$-dimensional lattice polytope $P$ in
$\real^n$ is called \emph{$\rat$-normal} if  $\codeg_\rat(P)=\tau(P)$.
\end{definition}

We shall now explain the notion of generalized Cayley polytopes --- these are
particular examples of the twisted Cayley polytopes defined in \cite{CaDR}. 
 
\begin{definition}\label{cayley} Let $P_0,\ldots,P_k\subset\real^m$ be lattice
polytopes in $\zed^m$, $e_1,\ldots,e_k$  a basis for $\zed^k$ and
$e_0=0\in\zed^k$. If $\{m_i^j\}_i$ are the vertices of $P_j$, so that $P_j ={\rm
Conv}\{m_i^j\}$ is the convex hull,  and $s$ is a positive integer, consider the
polytope 
${\rm Conv}\{(m_i^j, se_j)\}_{i,j}\subseteq \real^{m+k}$. Any polytope $P$ which
is affinely equivalent to this polytope will be called an
\emph{$s$th order generalized Cayley polytope} associated to $P_0,\ldots,P_k$,
and it will be denoted by 
$$
P\cong [P_0\ast P_1\ast\cdots\ast P_k]^s .$$
If all the polytopes $P_0, \dots, P_k$ have the same normal fan $\Sigma$
(equivalently, if they are strictly combinatorially equivalent), we call $P$
\emph{strict}, and denote it by $$
P\cong\Cayley^s_{\Sigma}(P_0,\ldots,P_k).$$
 If in addition $s=1$, we write $P\cong\Cayley_{\Sigma}(P_0,\ldots,P_k)$ and
call $P$ a \emph{strict Cayley polytope}. 
\end{definition}

Smooth generalized strict Cayley polytopes are natural examples of 
$\rat$-normal polytopes. In
Proposition~\ref{localsplit} we give  sufficient conditions for  $P\cong
\Cayley^s_{\Sigma}(P_0,\ldots,P_k)$ to be
  $\rat$-normal, and we compute the common value $\codeg_\rat(P) = \tau(P)$ in
this case.
  \medskip

%%%%%%%%%%%%%%%%%%%%%%%%%%%%%%
In \cite{Ba-Ni} Batyrev and Nill   posed the following question:
\medskip

\noindent {\bf {\sc Question}}. {\em Given an integer $d$,  does there exist an
integer $N(d)$ such that every lattice polytope of degree $d$ and dimension
$\geq N(d)$ is a Cayley polytope?}
\smallskip

\noindent 
A first general answer was given by C. Haase, B. Nill, and S. Payne
in~\cite{hnp}, where they prove the existence of a lower bound which is quadratic
 in $d$.
\medskip

In this article we give an optimal linear bound in the case of smooth
$\rat$-normal lattice polytopes,  and show that polytopes of dimension greater
than or equal to this bound are  strict Cayley polytopes.
\smallskip

\noindent {\bf {\sc Answer}}. For $n$-dimensional smooth $\rat$-normal lattice
polytope, we can take $N(d)=2d+1$. More precisely, if $P$ is a smooth
$\rat$-normal lattice polytope of dimension $n$ and degree $d$ such that $n\geq 
2d+1$, then $P$ is a strict Cayley polytope.
\medskip

Observe that the condition $n\geq  2\deg (P)+1$ is equivalent to $\codeg (P)\ge
\frac{n+3}2$. Furthermore, note that our bound is sharp: consider the standard
$n$-dimensional simplex $\Delta_n$ and let $P:= 2 \Delta_n$ as in
Example~\ref{simplex}. 
Then, $\tau(P) = \codeg_\rat(P) = \frac{n+1} 2 = \frac{n+3} 2 -1$ and $P$ is not
 a  Cayley polytope. It is, however, a generalized Cayley polytope, with $s=2$.
We conjecture that an even smaller  linear bound, like $\frac{n+1}{s}$, should
suffice for the polytope to be an $s$-th  order generalized Cayley polytope. 

We shall deduce our answer from Theorem~\ref{defective} below, which gives a
characterization of $\rat$-normal smooth lattice polytopes with big codegree.
Before stating our main theorem, we recall the notion of a defective projective
variety.

\begin{definition} \label{def:defective} Let $X \subset \pn{N}$ be a projective
variety over an algebraically closed field and denote by $X^* \subset
(\pn{N})^\vee$ its dual variety in the dual projective space. Then $X$ is
defective if  $X^*$ is not a hypersurface, and its defect is the natural number
$\delta  = N-1 - \dim(X^*)$. 
\end{definition}

\begin{theorem}\label{defective} Let $P\subset \real^n$ be a smooth
 lattice polytope  of dimension $n$. Then the following statements
are equivalent:
\begin{enumerate}
\item[{\rm (1)}] $P$ is $\rat$-normal and $\codeg(P)\geq \frac{n+3}{2}$,   
\item[{\rm (2)}] $P=\Cayley_{\Sigma}(P_0,\ldots,P_k)$ is a smooth strict Cayley
polytope, where $k+1=\codeg(P)$ and $k>\frac{n}{2}$,
\item[{\rm (3)}]  the (complex) toric polarized variety $(X,L)$ corresponding to
$P$ is defective, with defect $\delta = 2 \codeg (P) -  2 - n$.
\end{enumerate}
\end{theorem}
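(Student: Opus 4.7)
The plan is to prove the cycle of implications $(2) \Rightarrow (1)$, $(2) \Rightarrow (3)$, $(1) \Rightarrow (2)$, and $(3) \Rightarrow (2)$. The two implications out of $(2)$ should follow essentially from the Cayley structure combined with the promised Proposition~\ref{localsplit}: a smooth strict Cayley polytope $\Cayley_\Sigma(P_0,\ldots,P_k)$ is automatically $\rat$-normal with codegree equal to $k+1$, so $k>n/2$ forces $\codeg(P)\geq (n+3)/2$, giving $(1)$. For $(2)\Rightarrow(3)$ I would translate the Cayley structure across the toric dictionary: the associated polarized variety $(X,L)$ is a projective bundle $\mathbb{P}_Y(\bigoplus_{i=0}^k L_i)$ over the smooth toric base $Y$ of $\Sigma$, and the classical computation of the dual variety of a scroll whose fiber dimension exceeds its base dimension yields defectivity with defect $\delta=2k-n$, which equals $2\codeg(P)-2-n$.

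The heart of the theorem is $(1)\Rightarrow(2)$. By Proposition~\ref{nefvalue} the polytope nef value coincides with the adjoint-theoretic nef value $\tau(X,L)=\inf\{t\in\rat : K_X+tL \text{ is nef}\}$, so $\rat$-normality together with $\codeg(P)\geq (n+3)/2$ forces $\tau(X,L)\geq (n+3)/2$. Applying the Kawamata--Shokurov base-point-free theorem to $K_X+\tau L$ produces the nef value morphism $\phi:X\to Y'$; its general fiber $F$ is a Fano manifold of index at least $\tau > \dim(F)/2+1$, and Fujita's classification of polarized Fano manifolds of large index then forces $F\cong\pn{k}$ with $L|_F=\cal{O}(1)$. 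Because $X$ is toric, $\phi$ can be taken equidimensional, and one obtains a projective bundle structure $X=\mathbb{P}_{Y'}(E)$ whose polytope picture is precisely a Cayley decomposition of $P$.

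For $(3)\Rightarrow(2)$ I would appeal to the existing classification of smooth dual defective toric polarized varieties, which identifies them as projective bundles with fiber dimension $k>n-k$ and defect exactly $2k-n$; matching with $\delta=2\codeg(P)-2-n$ pins down $k+1=\codeg(P)$ and returns the strict Cayley polytope.

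The main obstacle I anticipate lies in the last combinatorial step of $(1)\Rightarrow(2)$: passing from the abstract projective bundle on the variety side to a \emph{strict} ($s=1$) Cayley decomposition of $P$ with exactly $k+1=\codeg(P)$ summands. The alternative is a higher-order generalized Cayley polytope ($s>1$), as in Example~\ref{simplex}, where $\rat$-normality fails. This is the unique point at which the full equality $\tau(P)=\codeg_\rat(P)$ is consumed; controlling the lattice distances between the slices $P_i$ so as to simultaneously force $s=1$ and $k+1=\codeg(P)$ is the delicate combinatorial core of the argument, and is exactly where the $\rat$-normality hypothesis, rather than $\rat$-codegree alone, becomes essential.
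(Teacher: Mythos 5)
Your overall architecture (cycle of implications, Proposition~\ref{localsplit} for $(2)\Rightarrow(1)$, the scroll/defect computation for $(2)\Leftrightarrow(3)$ via the known classification) matches the paper. But the core implication $(1)\Rightarrow(2)$ has a genuine gap, and you have misplaced where the $\rat$-normality hypothesis does its work. Nothing in your argument rules out that the nef value morphism $\varphi_P$ is \emph{birational}, in which case its general fiber is a point, there is no Fano fiber to classify, and no Cayley structure exists. This is exactly the failure mode of Example~\ref{blowup}: the blow-up of $\pn{3}$ has a divisorial (birational) nef value contraction, $\tau(P)=2>\codeg_\rat(P)=1$, and the polytope is not $\rat$-normal. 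The paper's Lemma~\ref{nefmap} consumes $\rat$-normality precisely here: if $\varphi_P$ were birational, then $m(bK_X+aL)=L+D$ with $D$ effective would give $\codeg_\rat(P)\le\frac{ma-1}{mb}<\tau(P)$, contradicting $\tau(P)=\codeg_\rat(P)$. You instead claim $\rat$-normality is consumed in forcing $s=1$ in the Cayley decomposition; in the paper that step needs no such input --- it falls out of the Mori cone bound $-K_X\cdot C\le n+1$, which yields $\frac{n+1}{2}<\tau=\frac{k+1}{s}\le\frac{n+1}{s}$ and hence $s=1$ automatically.

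A second, independent problem is your identification of the fiber. The fibers $F$ of the nef value morphism satisfy $-K_F=\tau L|_F$ with $\tau>\frac{n+1}{2}\ge\frac{\dim F+1}{2}$, so the available index bound is only ``index greater than half the dimension'' --- the Mukai/Wi\'sniewski range, not the range $\ge\dim F-1$ covered by Kobayashi--Ochiai/Fujita. So ``Fujita's classification of polarized Fano manifolds of large index'' does not apply as stated. The paper sidesteps this entirely by exploiting toricness: Lemma~\ref{line} (via the Mori cone theorem and \cite{BSW}) shows a line is contracted, that $\tau$ is an integer with $\tau\ge\codeg(P)$ (which you also need, since $\rat$-normality alone only gives $\tau=\codeg_\rat(P)\le\codeg(P)$ and Lemma~\ref{lem:key} only gives $\tau>\codeg(P)-1$), and that $\varphi_P$ is an extremal-ray contraction after excluding $\Delta_{n/2}\times\Delta_{n/2}$; then Reid's theorem on toric extremal contractions delivers flatness, a smooth base, and general fiber $\pn{k}$. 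The final passage to a strict Cayley polytope (all fibers reduced $\pn{k}$'s, $\varphi_*L$ splitting as a sum of line bundles) then follows from flatness and $L^k\cdot F=1$, not from any further lattice-distance analysis.
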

\medskip

We conjecture that the assumption $\tau(P)=\codeg_\rat(P)$ always holds for
smooth lattice polytopes satisfying $\codeg(P)\geq \frac{n+3}{2},$ and we
therefore expect that the above classification holds for all smooth  polytopes.
\medskip

Our proof of Theorem  \ref{defective} relies on the study of the nef value of
nonsingular toric polarized varieties.  This is developed in Section~\ref{sec:nef}. 
Section~\ref{sec:cayley} contains the study of generalized Cayley polytopes in
terms of fibrations. In particular, for smooth generalized strict Cayley
polytopes $P^s =\Cayley_{\Sigma}^s(P_0,\ldots,P_k)$ such that $ s < k+1$  and 
$\dim P_i + 1 < \frac{k+1}s$ for all $i$, 
Proposition~\ref{localsplit} shows that $\codeg(P^s) > 1$,  providing a family of
lattice polytopes without interior lattice points. Section~\ref{sec:class}
contains the proof of the classification Theorem~\ref{defective}, together with
some final comments.
%%%%%%%%%%%%%%%%%%%%%%%%%%%%%%%%%%%%%%%%%%%%

\section{The codegree and the nef value} \label{sec:nef}

Let $X$ be a nonsingular projective variety over an algebraically closed field,
and let $L$ be an ample line bundle (or divisor) on $X$. 

\begin{definition} Assume that the canonical divisor $K_X$ is not nef. The {\it
nef value} of $(X,L)$ is defined as $$
\textstyle \tau_L:=\min_\real\{t\,|\, K_X+tL \text{ is nef}\}.$$
\end{definition}
\noindent By Kawamata's rationality theorem \cite[Prop. 3.1, p.~619]{kaw}, the nef value $\tau_L$
is a positive rational number.
 
\begin{proposition}\label{nefvalue} Let $X$ be a nonsingular projective toric
variety of dimension $n$, and let $L$ be an ample line bundle on $X$.  Let
$P\subset\real^n$ be the associated $n$-dimensional smooth lattice polytope.
Then   
 $$
\tau_L=\tau(P).$$
\end{proposition}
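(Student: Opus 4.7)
The plan is to use the toric dictionary between $L$ and $P$ to turn both $\tau_L$ and $\tau(P)$ into infima over the same set of rational numbers. Write $L = \sum_{i=1}^r a_i D_i$, where $D_i$ is the torus-invariant prime divisor corresponding to the inner normal $\rho_i$, so that $K_X = -\sum D_i$. Then for any positive integers $a,b$,
\[
aL + bK_X \;=\; \sum_{i=1}^r (a a_i - b)\, D_i,
\]
and the (virtual) polyhedron associated to this torus-invariant $\rat$-divisor is exactly $(aP)^{(b)}$.

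The key step to establish is the equivalence: $aL+bK_X$ is nef if and only if $aP$ is $b$-spanned. This uses the torus-equivariant form of Kleiman's criterion, which says that on a smooth projective toric variety a torus-invariant $\rat$-Cartier divisor $\sum b_i D_i$ is nef iff for every maximal cone $\sigma = \langle \rho_{i_1},\ldots,\rho_{i_n}\rangle$ the unique point $m_\sigma$ satisfying $\langle \rho_{i_j}, m_\sigma\rangle = -b_{i_j}$ for $j=1,\ldots,n$ also satisfies $\langle \rho_i, m_\sigma\rangle \ge -b_i$ for every remaining $i$. Applied with $D = aL+bK_X$, each such $m_\sigma$ is precisely the shifted vertex $m_\sigma(b)$ of $aP$ (in the notation from the definition of $s$-spanned), and the inequalities amount to $m_\sigma(b) \in (aP)^{(b)}$. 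Ranging over all maximal cones yields the stated equivalence.

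Given this, the proposition follows quickly. By Kawamata's rationality theorem $\tau_L$ is a positive rational number and the infimum in its definition is attained, so $\tau_L L + K_X$ is nef. Writing $\tau_L = a/b$ and clearing denominators, $aL+bK_X$ is nef; the key equivalence then shows $aP$ is $b$-spanned, giving $\tau(P) \le \tau_L$. For the reverse inequality, any positive rational $a'/b' < \tau_L$ makes $a'L+b'K_X$ non-nef by the definition of $\tau_L$, so $a'P$ is not $b'$-spanned, hence $\tau(P)\ge \tau_L$.

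The main obstacle is the vertex-wise nef criterion in the second paragraph. While standard, it relies on identifying the torus-invariant curves of $X$ with the walls of the fan and computing their intersection numbers with $\sum b_i D_i$ in combinatorial terms; this is where the smoothness of $P$ plays an essential role, ensuring each maximal cone has a genuine dual lattice vertex. Once this combinatorial nef criterion is pinned down, the rest is a routine matching of infima via Kawamata's theorem.
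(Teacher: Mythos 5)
Your proof is correct, and its overall architecture matches the paper's: both translate $aL+bK_X$ into the polytope $(aP)^{(b)}$ and reduce everything to the vertex-wise condition $m_\sigma(b)\in (aP)^{(b)}$ over all maximal cones. The difference is in which toric fact carries the key step. The paper proves that $bK_X+aL$ is \emph{globally generated} if and only if $aP$ is $b$-spanned, by writing down the monomial basis of $H^0(X,bK_X+aL)$ indexed by lattice points of $(aP)^{(b)}$, restricting sections to the affine patch $U_\sigma$, and using that the base locus is torus-invariant (hence, if nonempty, contains a fixed point); it then implicitly identifies nef with spanned on complete toric varieties (a fact it cites elsewhere via Musta\c{t}\u{a}). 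You instead invoke the combinatorial nef criterion directly --- a torus-invariant $\rat$-Cartier divisor is nef iff each local datum $m_\sigma$ lies in the associated polyhedron, equivalently iff the support function is convex, equivalently iff the divisor meets every invariant curve non-negatively --- which lets you bypass the sections and the base-locus argument and also handles the $\rat$-divisor $K_X+tL$ without clearing denominators until the end. Both routes are standard and rigorous; the paper's is more self-contained at the level of explicit sections, while yours is shorter once the toric Kleiman/convexity criterion is accepted as known. Your bookkeeping with Kawamata's theorem (the nef cone is closed, so the minimum defining $\tau_L$ is attained and rational) and the two-sided comparison of the infima is also fine. One small remark: the existence of the points $m_\sigma$ is guaranteed for any Cartier divisor on any toric variety, not just smooth ones; what smoothness buys you here is that $m_\sigma$ is expressed in the dual lattice basis of $\rho_1,\ldots,\rho_n$ and coincides with the shifted vertex $m(b)$ appearing in the definition of $b$-spanned, exactly as you use it.
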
 

\begin{proof}  If we write the
polytope $P$ as $\cap_{i=1}^r H^+_{\rho_i,-a_i}$, then $L=\sum_{i=1}^r a_iD_i$, where
the $D_i$ are the invariant divisors on $X$. Since $K_X=-\sum_{i=1}^r D_i$,
the polytope associated to $X$ and the
adjoint line bundle $bK_X+aL$ is 
$$\textstyle P_{bK_X+aL}=\cap_{i=1}^r H^+_{\rho_i,-a\cdot
a_i+b}=(aP)^{(b)}.$$
The lattice points of $(aP)^{(b)}$ form a basis for the
vector space of global sections of $bK_X+aL$, 
$$
H^0(X,bK_X+aL)=\oplus_{m\in (aP)^{(b)}\cap M}\comp \chi^m$$
(see \cite[Lemma 2.3, p.~72]{oda}).

Denote by $\Sigma$ the fan of $X$, 
let $x(\sigma)$ be the fixed point associated to the $n$-dimensional cone
$\sigma=\langle\rho_1,\ldots,\rho_n\rangle\in\Sigma$, and let
$U_\sigma=X\setminus(\cup_{\tau\not\subset\sigma} V(\tau))$ be the affine patch
containing $x(\sigma)$. The restriction of a generator 
$\chi^m\in H^0(X,bK_X+aL)$ to $U_\sigma$ is $$
\chi^m|_{U_\sigma}=\Pi_1^n \chi_i^{\langle m,\rho_i\rangle-(-a\cdot a_i+b)},$$
where $\chi_1,\ldots,\chi_n$ is a system of local coordinates such that
$x(\sigma)=(0,\ldots,0).$ It follows that the line bundle $bK_X+aL$ is spanned,
or  globally generated, at $x(\sigma)$ (i.e., it has at least one nonvanishing
section  at $x(\sigma)$) if and only if the lattice point $(-a\cdot
a_1+b,\ldots,-a\cdot a_n+b)$, written with respect to the dual basis of $\rho_1,\ldots, \rho_n$, is in $(aP)^{(b)}$.

Because the base locus of a line bundle is invariant under the torus action, if
it is non empty, it must be the union of invariant subspaces. Hence it has to
contain fixed points. It follows that $bK_X+aL$ is spanned if and only if it is
spanned at each fixed point, hence if and only if the polytope $aP$ is
 $b$-spanned.
\end{proof}

\begin{corollary}\label{n+1} 
Let $P\subset\real^n$ be an $n$-dimensional smooth lattice polytope. Then
\begin{enumerate}
\item[{\rm (1)}] $\codeg(P)=\codeg_\rat(P)=\tau(P)=n+1$ if and only if
$P=\Delta_n.$
\item[{\rm (2)}] If $P\neq \Delta_n,$ then $\codeg_\rat(P)\leq \tau(P)\leq n.$ 
\end{enumerate}
\end{corollary}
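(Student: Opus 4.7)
The plan is to exploit Proposition~\ref{nefvalue}, which identifies $\tau(P)$ with the nef value $\tau_L$ of the associated polarized toric variety $(X,L)$.

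The ($\Leftarrow$) direction of (1) is a direct combinatorial computation. Writing $\Delta_n = \bigcap_{i=1}^n H^+_{e_i,0}\cap H^+_{-e_1-\cdots-e_n,-1}$, one checks that $(a\Delta_n)^{(b)} = \{x\in\real^n : x_i\ge b \text{ for all } i,\ x_1+\cdots+x_n\le a-b\}$ is nonempty precisely when $a/b\ge n+1$, and the same inequality characterizes $b$-spannedness at every vertex by the symmetry of $\Delta_n$. Hence $\codeg(\Delta_n)=\codeg_\rat(\Delta_n)=\tau(\Delta_n)=n+1$.

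The heart of the proof is the following dichotomy: \emph{either} $\tau(P)\le n$, \emph{or} $P=\Delta_n$ (in which case $\tau(P)=n+1$). To establish it, suppose $\tau_L>n$. By Kawamata's rationality theorem the nef-value morphism $\phi\colon X\to Y$ exists and contracts an extremal face on which $K_X+\tau_L L$ is trivial. By the Ionescu--Wi\'sniewski length bound for extremal rays, there is a rational curve $C$ in a fiber $F$ of $\phi$ with
\[
0 < -K_X\cdot C \le \dim F + 1 \le n+1.
\]
Since $\tau_L = (-K_X\cdot C)/(L\cdot C)$ on $C$ and $L\cdot C\ge 1$ (as $L$ is ample and $C$ is an integral curve), the hypothesis $\tau_L > n$ forces $-K_X\cdot C\ge n+1$. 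Combined with the length bound this gives $\dim F=n$ (so $Y$ is a point and $X$ is Fano), $L\cdot C=1$, and $-K_X\equiv(n+1)L$. Kobayashi--Ochiai's characterization of projective space then yields $(X,L)\cong (\pn{n},\cal{O}(1))$, i.e., $P=\Delta_n$.

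Both statements now follow: for (1), $\tau(P)=n+1>n$ forces $P=\Delta_n$ via the dichotomy, and the converse is the computation above; for (2), if $P\ne\Delta_n$ the dichotomy gives $\tau(P)\le n$, whence $\codeg_\rat(P)\le\tau(P)\le n$ by Remark~\ref{relation}. The main obstacle is justifying the Mori-theoretic input—the extremal-ray length bound and Kobayashi--Ochiai—and checking they apply in the present setting; this is straightforward since $(X,L)$ is smooth and polarized.
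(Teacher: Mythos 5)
Your proof is correct, but it takes a genuinely different route from the paper's. After the common reduction via Proposition~\ref{nefvalue}, the paper disposes of both statements in a few lines by quoting Musta\c{t}\u{a}'s toric Fujita-type result \cite[Cor.~4.2]{Mustata}: $K_X+H$ is spanned for any line bundle $H$ with $H\cdot C\ge n$ on all invariant curves $C$, unless $(X,H)=(\pn{n},{\mathcal O}_{\pn{n}}(n))$; applying this with $H=nL$ immediately gives $\tau(P)\le n$ unless $(X,L)=(\pn{n},{\mathcal O}_{\pn{n}}(1))$, i.e.\ $P=\Delta_n$. You instead run the classical adjunction-theoretic argument: Kawamata rationality, the nef-value contraction, the extremal-ray length bound $-K_X\cdot C\le \dim F+1$ forcing (when $\tau_L>n$) $L\cdot C=1$, $\dim F=n$ and $-K_X\equiv (n+1)L$, and then Kobayashi--Ochiai. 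This is valid --- it is essentially the standard proof, for an arbitrary smooth polarized variety, that $\tau_L\le n+1$ with equality only for $(\pn{n},{\mathcal O}_{\pn{n}}(1))$ --- and it buys generality and independence from toric vanishing theorems at the cost of heavier Mori-theoretic input. Two small points of care: the fiber $F$ in the length inequality should be taken for the contraction of the single extremal ray generated by $C$ (which is contained in a fiber of $\varphi_P$, so the conclusion is unaffected), and in the toric setting you could replace Kobayashi--Ochiai by the elementary fact that a smooth complete toric variety of Picard number one is $\pn{n}$. Your explicit computation of $(a\Delta_n)^{(b)}$ for the converse direction of (1) agrees with what the paper leaves implicit.
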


\begin{proof} 
Let $(X,L)$ be the polarized nonsingular toric variety corresponding to $P$. In
\cite[Cor. 4.2]{Mustata} it is proven that $K_X+H$ is spanned for any line
bundle $H$ on $X$ such that $H\cdot C\geq n$ for all invariant curves $C$ on
$X$, unless $X=\pn{n}$ and $H={\cal O}_{\pn{n}}(n).$ 

Because the line bundle $L$ is ample, we have $L\cdot C\geq 1$ and thus $nL\cdot
C\geq n$, for all invariant curves $C$.  It follows that if $X\neq \pn{n}$ or
$nL\neq {\cal O}_{\pn{n}}(n)$, then $\tau_L=\tau(P)\le n$. This proves $(2)$. If
$(X,L)=(\pn{n},{\cal O}_{\pn{n}}(1))$, then $\tau_L=\tau(P)=n+1$, because
$K_{\pn{n}}={\cal O}_{\pn{n}}(-n-1)$. The corresponding polytope is $P=\Delta_n$
and $\codeg(P)=\codeg_\rat(P)=\tau(P)=n+1$, as stated in $(1)$. 
\end{proof}

\begin{lemma} \label{lem:key} Let $P\subset \real^n$ be a smooth $n$-dimensional
polytope with codegree $c$ and nef value $\tau$. Then $\tau \in \rat_{>0}$ and 
$\tau > c-1$.
\end{lemma}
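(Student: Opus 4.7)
The plan is to translate the lemma into the language of the associated polarized toric variety $(X,L)$ via Proposition~\ref{nefvalue}, and then extract both conclusions from standard properties of nef line bundles on smooth toric varieties. Rationality of $\tau$ is then immediate: by that proposition $\tau(P) = \tau_L$, and Kawamata's rationality theorem---already invoked just before Proposition~\ref{nefvalue}---guarantees $\tau_L \in \rat_{>0}$, hence $\tau \in \rat_{>0}$.

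For the strict inequality I would argue by contradiction. Suppose $\tau \le c-1$. Since $\tau_L$ is by definition the smallest real number $t$ for which $K_X + tL$ is nef, the assumption forces the adjoint divisor $K_X + (c-1)L$ to be nef. The toric input I would invoke is the classical fact that on a smooth complete toric variety a Cartier divisor is nef if and only if it is globally generated; applied to $K_X + (c-1)L$ this yields nonzero global sections. The identification of $H^0$ with lattice points of the corresponding polytope, already carried out in the proof of Proposition~\ref{nefvalue}, then shows that $((c-1)P)^{(1)} \cap \zed^n \neq \emptyset$, which directly contradicts the minimality in the definition of $\codeg(P) = c$.

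The one substantive step is the equivalence between nefness and global generation on a smooth complete toric variety: this is exactly what upgrades the purely numerical condition ``$K_X + (c-1)L$ is nef'' into the existence of an honest lattice point in $((c-1)P)^{(1)}$. Everything else amounts to bookkeeping through the polytope--line bundle dictionary already established in Proposition~\ref{nefvalue}, so beyond isolating and citing this toric fact I do not anticipate any further obstacle.
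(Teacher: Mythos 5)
Your proposal is correct and follows essentially the same route as the paper: both pass to the polarized toric variety $(X,L)$, get rationality of $\tau$ from Kawamata via Proposition~\ref{nefvalue}, observe that $K_X+(c-1)L=(K_X+\tau L)+(c-1-\tau)L$ would be nef if $c-1\ge\tau$, and then use the toric fact that nef Cartier divisors are globally generated (the paper cites Musta\c{t}\u{a}) to produce a lattice point of $((c-1)P)^{(1)}$, contradicting $\codeg(P)=c$. The only cosmetic difference is that you argue by contradiction where the paper argues directly.
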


\begin{proof} Let $(X,L)$ be the polarized projective toric variety associated
to $P$. Then $\tau= \tau_L \in {\rat}_{>0}$. By \cite[Lemma~0.8.3]{ BESO92}
$K_X+\tau L$ is nef (and not ample). For any $s \geq \tau$, $K_X + s L =
(K_X+ \tau L) + (s-\tau)L$ is also nef. When $s$ is an integer, this implies
that $K_X + sL$ is spanned, hence $(sP)^{(1)}
\cap \zed^n \neq \emptyset$. Taking $s=c-1$ and observing that $((c-1)P)^{(1)}$
has no lattice points, we deduce that $c-1 < \tau$, as claimed.
\end{proof}

Let $\tau(P)=\frac{a}{b},$ where $a,b$ are coprime. On complete toric varieties,
nef line bundles with integer coefficients are spanned
(see~\cite[Thm.~3.1]{Mustata}). It follows that 
 the linear system $|bK_X+a L|$ defines a morphism 
$$
\varphi: X\to \pn{N},$$
where $N=|(aP)^{(b)}\cap \zed^n|-1$. The Remmert--Stein factorization gives
$\varphi=f\circ\varphi_P,$ where $\varphi_P:X\to Y$ is a  morphism with
connected fibers onto a normal toric 
variety $Y$ such that $\dim Y=\dim (aP)^{(b)}$ and $f:Y \to \pn{N}$.  Moreover,
$\varphi_P$ is the contraction of a face of the nef cone NE($X$) \cite[Lemma
4.2.13, p.~94]{BESO}.

\begin{rem}\label{integer} If the morphism $\varphi_P$ contracts a line, i.e.,
if there is a curve $C$ such that $L\cdot C=1$ and $\varphi_P(C)$ is a point,
then $\tau(P)$ is  necessarily an integer. In fact $0=(bK_X+aL)\cdot C$ implies
$\frac{a}{b}=-K_X\cdot C\in \zed$.
\end{rem}

\begin{lemma}\label{line} Let $P$ be a smooth $n$-dimensional polytope and let
$(X,L)$ be the corresponding polarized toric variety. If $\tau(P)>\frac{n+1}2$,
then there exists an invariant line on $X$ contracted by the nef value morphism
$\varphi_{P}$. In particular, 
$\tau(P)\in\zed$, and $\tau(P)\ge \codeg(P)$. If, moreover, $\varphi_{P}$ is not
birational, then $\varphi_{P}$ is the contraction of an extremal ray in the nef
cone, unless $n$ is even and $P=\Delta_{\frac{n}{2}}\times
\Delta_{\frac{n}{2}}$.
\end{lemma}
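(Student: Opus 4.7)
The plan is to exploit the fact that the nef value morphism $\varphi_P$ is the contraction of the face of the Mori cone $\overline{\mathrm{NE}}(X)$ on which $K_X+\tau_L L$ vanishes, and to combine Mori's length bound $-K_X\cdot C\le n+1$ with Wi\'sniewski's inequality on the dimensions of extremal fibers. Throughout I identify $\tau := \tau(P) = \tau_L$ via Proposition~\ref{nefvalue}.

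First I would pick any extremal ray $R\subseteq\overline{\mathrm{NE}}(X)$ contracted by $\varphi_P$. On a smooth complete toric variety every extremal ray is generated by the class of a torus-invariant rational curve (Reid's toric Mori theory), so one may choose an invariant curve $C$ with $(K_X+\tau L)\cdot C = 0$, that is $-K_X\cdot C = \tau(L\cdot C)$. Mori's bound $-K_X\cdot C\le n+1$, combined with $\tau > (n+1)/2$, forces $L\cdot C < 2$ and hence $L\cdot C = 1$. Thus $C$ is an invariant line contracted by $\varphi_P$, and $\tau = -K_X\cdot C$ is an integer. Since Lemma~\ref{lem:key} already gives $\tau > \codeg(P) - 1$, the integrality just established yields $\tau\ge\codeg(P)$, proving the first two assertions.

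For the last assertion, assume $\varphi_P$ is not birational and, aiming at the exceptional case, that the face $F$ contracted by $\varphi_P$ is not a single ray. Pick two distinct extremal rays $R_1,R_2\subseteq F$; the argument above provides an invariant line $C_i$ on each with length $\ell(R_i) := -K_X\cdot C_i = \tau$. Both contractions $\pi_i\colon X\to Y_i$ of $R_i$ are of fiber type, because they factor the fiber-type morphism $\varphi_P$. Wi\'sniewski's inequality then gives, for a nontrivial fiber $F_i$ of $\pi_i$, the bound $\dim F_i \ge \ell(R_i) - 1 = \tau - 1$. Since $\tau\in\zed$ and $\tau > (n+1)/2$, we have $\tau\ge\lceil (n+2)/2\rceil$. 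For $n$ odd this forces $\dim F_1+\dim F_2 \ge 2\tau - 2 > n$, which is impossible, so in that case $F$ must be a single ray. For $n$ even the only remaining possibility is $\tau = n/2 + 1$ with $\dim F_i = n/2$ for both $i$; two extremal fiber-type contractions whose fiber dimensions sum to $n$ force $X$ to split as the product of the two families of fibers, each of which is a smooth toric Fano variety of Picard rank one and dimension $n/2$, hence $\pn{n/2}$. The relation $L\cdot C_i = 1$ then pins $L$ down to $\mathcal{O}(1,1)$ on $\pn{n/2}\times\pn{n/2}$, giving $P = \Delta_{n/2}\times\Delta_{n/2}$.

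The main obstacle I anticipate is the last step: rigorously deducing the product decomposition of $X$ from the existence of two extremal fiber-type contractions with fiber dimensions summing to $n$. One option is to invoke a product-structure theorem of Occhetta--Wi\'sniewski type on unsplit dominating families of rational curves; alternatively, in the toric setting, one should be able to argue combinatorially on the fan, since the two rulings determine two transverse torus-invariant foliations by projective subspaces, which forces the fan to be the product of two simplicial fans of type $\pn{n/2}$.
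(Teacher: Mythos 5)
Your proof of the first two assertions is exactly the paper's argument: take an invariant curve $C$ generating an extremal ray contracted by $\varphi_P$, combine Mori's bound $-K_X\cdot C\le n+1$ with $(K_X+\tau L)\cdot C=0$ and $\tau>\frac{n+1}{2}$ to force $L\cdot C=1$ and $\tau=-K_X\cdot C\in\zed$, then upgrade $\tau>\codeg(P)-1$ from Lemma~\ref{lem:key} to $\tau\ge\codeg(P)$ by integrality. No issues there.

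The divergence is in the last assertion. The paper does not prove it: it cites \cite[(3.1.1.1), p.~30]{BSW}, which is precisely the statement that a non-birational nef value morphism under this length hypothesis is elementary unless $(X,L)=(\pn{n/2}\times\pn{n/2},\mathcal{O}(1,1))$. You are in effect reconstructing the proof of that cited result via Wi\'sniewski's inequality, and your reconstruction has two gaps. First, the claim that the elementary contractions $\pi_i$ of the rays $R_i\subseteq F$ are of fiber type ``because they factor the fiber-type morphism $\varphi_P$'' is not justified: a birational elementary contraction can perfectly well be followed by a fiber-type map, so fiber-type of the composite says nothing about $\pi_i$. (This is repairable: if $\pi_i$ were birational, Wi\'sniewski's inequality $\dim F_i\ge n-\dim E_i+\ell(R_i)-1\ge \ell(R_i)=\tau$ makes its fibers even larger, and the same dimension count against the other ray still yields a contradiction --- but you need to say this.) Second, and more seriously, the product decomposition in the even-dimensional borderline case is exactly the hard step, and you leave it as an acknowledged obstacle rather than proving it; ``two transverse rulings force a product'' is the conclusion of a nontrivial theorem (Beltrametti--Sommese--Wi\'sniewski, or in the toric setting a genuine fan argument), not something that follows from the dimension count alone. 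So as written the proposal establishes everything except the final dichotomy, for which the honest fix is to do what the paper does and cite \cite{BSW}, or else to supply the missing splitting argument in full.
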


\begin{proof} 
Because the nef value morphism is the contraction of a face of the Mori cone, it
contracts at least one extremal ray.  Take $C$ to be a generator of this ray. 
Recall that, by Mori's Cone theorem (see e.g. \cite[p.~25]{CKM88}), $n+1\ge
-K_{X}\cdot C$. Because  $(K_{X}+\tau(P) L)\cdot C=0$, we have 
\[
\textstyle n+1\ge -K_{X}\cdot C=\tau(P) L\cdot C > \frac{n+1}2 L\cdot C,
\]
which gives $L\cdot C=1$ and $\tau(P) = -K_{X}\cdot C \in \zed$. 
Lemma~\ref{lem:key}  gives  $\tau(P) > \codeg(P) -1$ from which we deduce that
$\tau(P) \ge \codeg(P)$. 
If $\varphi_{P}$ is not birational, the last assertion follows from 
\cite[(3.1.1.1), p.~30]{BSW}.
\end{proof}

We will also need the following key lemma. This lemma, and its proof, is essentially the same as \cite[Lemma 7.1.6, p.~157]{BESO}.

\begin{lemma}\label{nefmap} Let $(X,L)$ be the polarized nonsingular toric
variety associated to a smooth $\rat$-normal lattice polytope $P$. Then the
morphism $\varphi_P$ is not birational.
\end{lemma}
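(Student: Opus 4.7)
The plan is to argue by contradiction, using the identification $\dim Y = \dim(aP)^{(b)}$ recorded in the paragraph preceding the lemma, where $\tau(P) = a/b$ is written in lowest terms. If $\varphi_P$ is birational, then $\dim Y = n$, so the polytope $(aP)^{(b)} = \cap_{i=1}^r H^+_{\rho_i, -a a_i + b}$ is full-dimensional and therefore has a non-empty interior. My strategy is to perturb the inner offsets slightly to produce a non-empty polytope of the form $(NaP)^{(Nb+1)}$ for a suitable integer $N$, contradicting $\rat$-normality.

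Concretely, pick any interior point $x_0 \in (aP)^{(b)}$. Every facet inequality is then strict at $x_0$, so
\[
\epsilon := \min_{1 \le i \le r}\bigl(\langle \rho_i, x_0\rangle + a a_i - b\bigr) > 0.
\]
Choose an integer $N \ge 1/\epsilon$; then $\langle \rho_i, x_0 \rangle \ge -a a_i + b + 1/N$ for all $i$, so $x_0 \in (aP)^{(b+1/N)}$. The elementary identity $N \cdot (aP)^{(b+1/N)} = (NaP)^{(Nb+1)}$, immediate from the half-space description, shows that $(NaP)^{(Nb+1)}$ is non-empty, and hence
\[
\codeg_\rat(P) \, \le \, \frac{Na}{Nb+1} \, < \, \frac{a}{b} \, = \, \tau(P),
\]
contradicting the hypothesis $\codeg_\rat(P) = \tau(P)$.

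I do not anticipate a serious obstacle: the argument is a short convex-geometric perturbation, and the only input from toric geometry is the Remmert--Stein equality $\dim Y = \dim (aP)^{(b)}$, which is already in hand. The conceptual point is that $\rat$-normality is an exact-equality condition, so any strict rational improvement on $\codeg_\rat$ obtained by nudging the offsets inward is enough to derive the contradiction; full-dimensionality of $(aP)^{(b)}$ is precisely what provides the room to nudge.
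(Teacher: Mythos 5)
Your proof is correct, and it takes a genuinely different route from the paper's. The paper argues via adjunction theory: assuming $\varphi_P$ birational, it invokes \cite[Lemma 2.5.5]{BESO} to produce an integer $m$ and an effective divisor $D$ with $m(bK_X+aL)=L+D$, so that $D\in|mbK_X+(ma-1)L|$ and hence $\codeg_\rat(P)\le\frac{ma-1}{mb}<\frac{a}{b}=\tau(P)$. You reach the same kind of contradiction --- a rational number strictly below $\tau(P)$ at which the adjoint polytope is nonempty --- but by purely convex-geometric means: birationality forces $\dim Y=n$, hence $(aP)^{(b)}$ is full-dimensional via the Remmert--Stein identity $\dim Y=\dim(aP)^{(b)}$ already recorded in the paper, and an interior point can be pushed to exhibit $(NaP)^{(Nb+1)}\neq\emptyset$ with $\frac{Na}{Nb+1}<\frac{a}{b}$. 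All the steps check out: every defining inequality is indeed strict at an interior point of a full-dimensional polytope (a nonzero linear functional cannot attain its minimum over the polytope at an interior point), and the rescaling identity $N\cdot(aP)^{(b+1/N)}=(NaP)^{(Nb+1)}$ is immediate from the half-space description. What your approach buys is self-containedness and elementarity --- the only external input is the dimension statement from the Remmert--Stein factorization, and in effect you show the cleaner statement that $\rat$-normality forces $(aP)^{(b)}$ to be lower-dimensional; what the paper's approach buys is that its key lemma is a general adjunction-theoretic fact, not special to the toric/combinatorial setting.
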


\begin{proof} Let $\frac{a}{b}=\tau(P).$ Assume the morphism $\varphi_P$ is 
birational. By \cite[Lemma 2.5.5, p.~60]{BESO} there is an integer $m$ and an
effective divisor $D$ on $X$ such that $$
m(bK_X+a L)=L+D.$$
It follows that $D\in |mbK_X+(ma-1)L|$, and thus $\codeg_\rat(P)
\leq\frac{ma-1}{mb}<\frac{a}{b} = \tau(P)$, which contradicts the assumption
that $P$ is $\rat$-normal.
\end{proof}
%%%%%%%%%%%%%%%%%%%%%%%%%%%

\section{Generalized Cayley Polytopes and toric fibrations}\label{sec:cayley}

Strict generalized Cayley polytopes (recall Definition~\ref{cayley})
  correspond to particularly nice toric fibrations, namely projective bundles.
We will  compute their associated nef values and codegrees. We refer to
\cite[Section 3]{CaDR} for further details on toric fibrations.

\begin{definition}\label{toricfibration} A \emph{polarized toric  fibration} is
a quintuple $(f, X, Y, F,L),$ where
\begin{enumerate}
\item $X$ and $Y$ are normal toric varieties with $\dim(Y)<\dim(X)$,
\item $f\colon X\to Y$ is an equivariant flat surjective morphism  with
connected fibers,
\item the general fiber of $f$  is isomorphic to the (necessarily toric) variety
$F$,
\item $L$ is an ample line bundle on $X$.
\end{enumerate}
\end{definition}

There is a $1$-$1$ correspondence between polarized toric fibrations and
fibrations of polytopes, making Definition \ref{toricfibration} equivalent to
the following. 

\begin{definition} Let $\pi:M\to \Lambda$ be a surjective map of lattices and
let $P_0,\ldots,P_k\subset M_\real$ be lattice polytopes. We call $\pi$ a
\emph{fibration with fiber $\Delta$} if
\begin{enumerate}
\item $\pi_\real(P_i)=m_i\in\Lambda$ for every $i=0,\ldots,k$,
\item $m_0,\ldots,m_k$ are all distinct and are the vertices of $$
\Delta:=\Conv\{m_0,\ldots,m_k\}\subset \Lambda_{\real},$$
\item $P_0,\ldots,P_k$ have the same normal fan, $\Sigma$.
\end{enumerate}
\end{definition}

In \cite[Lemma 3.6]{CaDR} it is proven that $(f, X, Y, F,L)$ is a toric
fibration if and only if the polytope $P\subset M_\real$ associated to $(X,L)$  
has the structure of a fibration. More precisely, $(f, X, Y, F,L)$ is a toric
fibration if and only if there is a sublattice $\Lambda^{\vee}\hookrightarrow
M^{\vee}$ such that the dual map $\pi:M\to \Lambda$ is a fibration of polytopes
with fiber $\Conv\{\pi(P)\}\subset \Lambda\otimes_\zed \real$. Moreover, $F$ is
the toric variety defined by the inner normal fan of $\Conv\{\pi(P)\}$, and
every fiber of $f\colon X\to Y$, with the reduced scheme structure, is
isomorphic to $F$.

 Observe that by construction, $\Cayley^s_{ \Sigma}(P_0,\ldots,P_k)\subset
M_\real\cong
\real^m \oplus \Lambda_\real$, where  $\Lambda_\real=\real^{k}$. The projection
$$
\pi :M\to \Lambda,  \quad \quad \pi(m,e)=e,$$
gives the polytope $\Cayley^s_{ \Sigma}(P_0,\ldots,P_k)$ the structure of a  a
fibration with fiber $s\Delta_k$.

It follows that $\Cayley^s_{ \Sigma}(P_0,\ldots,P_k)$ defines a toric fibration
$f\colon X\to Y$ with general fiber isomorphic to $\pn{k}$. 

\begin{example} The strict Cayley sum
$\Cayley_{\Sigma(\Delta_1)}^2(4\Delta_1,2\Delta_1,2\Delta_1)$
 is associated to  
the toric fibration $(\pi, \pn{}_{\pn{1}}(\mathcal O_{\pn{1}}(4)\oplus\mathcal
O_{\pn{1}}(2)\oplus\mathcal O_{\pn{1}}(2)), \pn{1}, \pn{2}, \xi)$, where $\xi$
is the tautological line bundle and $\pi$ is  the projection (see Figure~2).
\begin{figure}\label{fig:2}
 \begin{center}
 {}{}\scalebox{0.75}{\includegraphics{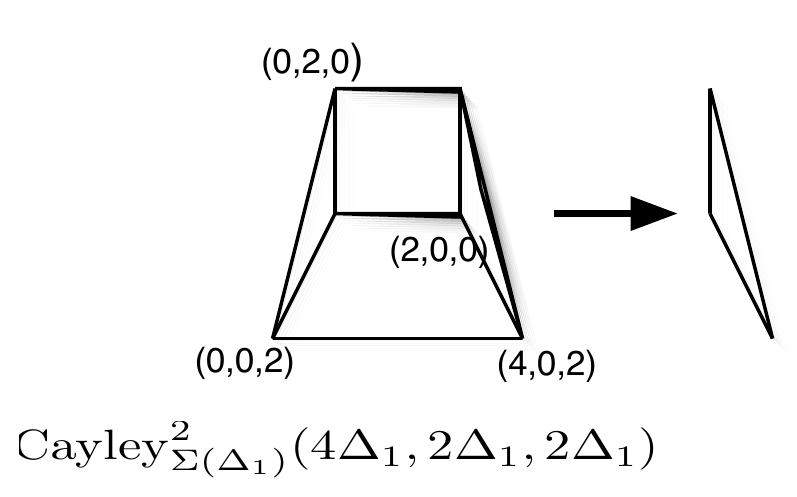}}
\caption{}\label{Cayley}
 \end{center}
 \end{figure}
 \end{example}

\begin{rem} Even if all the $P_i$ are smooth and the fiber $\Delta$ is smooth,
the polytope $\Cayley^s_{ \Sigma}(P_0,\ldots,P_k)$ is not necessarily smooth.
Consider the polytope $\Cayley_{\Sigma(\Delta_1)}^2(6\Delta_1,
5\Delta_1,3\Delta_1)$ depicted in Figure~3. 
At the vertex $(3,0,2)$, the first lattice points on the corresponding three
edges give the vectors $(2,0,2)-(3,0,2)=(-1,0,0)$, $(4,1,1)-(3,0,2)=(1,1,-1)$,
and $(6,0,0)-(3,0,2)=(3,0,-2)$, which do not form a basis for the lattice.
\begin{figure}\label{fig:3}
\begin{center}
 {}{}\scalebox{0.60}{\includegraphics{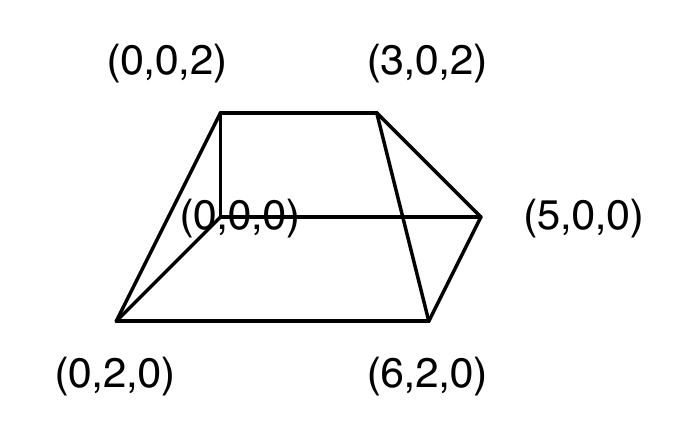}}
 \caption{}\label{Singular}
 \end{center}
 \end{figure}
 \medskip
 \end{rem}
 
 \begin{rem}
 When the polytopes $P_i$ are not strictly combinatorially equivalent, the
variety associated to the Cayley polytope $[P_0\star\cdots \star P_k]^s$ is birationally equivalent to a toric variety associated to a strict Cayley polytope, in the following precise way.
 
 The normal fan $\Sigma$  defined by the Minkowski sum $P_0+\cdots + P_k$  is a
common refinement of the normal fans defined by the polytopes $P_i$ (see e.g.
\cite[7.12]{Z95}). Let $(X_{P_i},L_i)$ be the polarized toric variety associated
to the polytope $P_i$ and let $\pi_i:Y\to X_{P_i}$ be the induced birational
morphism, where $Y$ is the toric variety defined by the fan $\Sigma$.
 Notice that the line bundle $\pi_i^*L_i$ on $Y$ is spanned, and the associated
polytope $Q_i$ is
 affinely equivalent to $P_i$.
 
 Set $P'_i=Q_i+\sum_{j=0}^k P_j$ for $i=0,\ldots,k$. Note that the $P'_i$ are strictly combinatorially equivalent, since their inner normal fan is $\Sigma$.
 The normal fan of the polytope $\Cayley^s_{\Sigma}(P'_0,\ldots,P'_k)$ is then a
refinement of the normal fan of the polytope $[P_0\star\cdots \star P_k]^s$, and
thus it defines  a  proper birational morphism 
 $$
\pi: X_{\Cayley^s_{\Sigma}(P'_0,\ldots,P'_k)}\to X_{[P_0\star\cdots \star
P_k]^s}.$$
 \end{rem}
 
\begin{lemma}\label{smooth} Let  $P_0,\ldots,P_k\subset\real^m$ be  strictly
combinatorially equivalent polytopes such that  the polytope
Ê$P^s=\Cayley^s_{\Sigma}(P_0,\ldots,P_k)$ is smooth. Let $f\colon X\to Y$ be the
associated toric fibration. Then $P_i$ is smooth for all $i=0,\ldots,k$, and 
all fibers are reduced and  isomorphic to $\pn{k}$.
\end{lemma}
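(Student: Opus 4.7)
The strategy is to extract information about the $P_i$ and the associated fibration by analyzing the local edge structure of $P^s$ at each of its vertices, where smoothness imposes strong lattice-theoretic constraints on both the summands and on the Cayley datum.

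First I would describe the vertices and edges of $P^s$. Writing $m_j^\sigma$ for the vertex of $P_j$ associated with a top-dimensional cone $\sigma$ of the common inner normal fan $\Sigma$, the vertices of $P^s$ are exactly the points $v_{j,\sigma} := (m_j^\sigma, s e_j)$. Since the projection $\pi: P^s \to s\Delta_k$ is linear, every edge of $P^s$ projects to a face of $s\Delta_k$ of dimension $\leq 1$. If it projects to a vertex $se_j$, it lies in the face $\pi^{-1}(se_j) = P_j \times \{se_j\}$ and is an edge of $P_j$. If it projects to an edge $[se_i, se_j]$ of $s\Delta_k$, it lies in the face $\pi^{-1}([se_i, se_j]) = [P_j \ast P_i]^s$, a two-layer Cayley prism whose vertices are paired between layers by the common normal fan $\Sigma$; its vertical edges are therefore exactly $[v_{j,\sigma}, v_{i,\sigma}]$ for each $\sigma$ and each $i \neq j$. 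Thus the edges of $P^s$ at $v_{j,\sigma}$ are precisely the horizontal edges of $P_j$ at $m_j^\sigma$ together with the $k$ vertical edges $[v_{j,\sigma}, v_{i,\sigma}]$ for $i \neq j$.

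Next I would translate the smoothness of $P^s$ at $v_{j,\sigma}$ into conditions on the data. Because $\dim P^s = m + k = n$, smoothness forces exactly $n$ edges at $v_{j,\sigma}$, so $P_j$ has exactly $m$ edges at $m_j^\sigma$ and is thus simple. The horizontal primitive edge vectors are then $(u_1, 0), \ldots, (u_m, 0)$ with $u_1, \ldots, u_m \in \zed^m$ the primitive edge vectors of $P_j$ at $m_j^\sigma$, while the vertical primitive edge vectors have the form $d_i^{-1}\bigl(m_i^\sigma - m_j^\sigma,\, s(e_i - e_j)\bigr)$ with divisors $d_i \mid s$. Collecting these as columns of the block upper triangular matrix
\[
\mathcal{M} = \begin{pmatrix} U & W \\ 0 & D \end{pmatrix},
\]
the unimodularity condition $|\det \mathcal{M}| = 1$ factors as $|\det U| \cdot \prod_{i \neq j} (s/d_i) = 1$. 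Since every factor is a positive integer, this forces $|\det U| = 1$, so $P_j$ is smooth at $m_j^\sigma$, and $d_i = s$ for each $i \neq j$, so $m_i^\sigma - m_j^\sigma \in s\zed^m$ and the vertical primitive edge vectors project onto a $\zed$-basis of $\Lambda = \zed^k$.

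Running this analysis at every vertex shows that each $P_j$ is a smooth polytope with normal fan $\Sigma$, so $\Sigma$ is a nonsingular fan and the base $Y$ of the associated toric fibration is a smooth toric variety carrying ample line bundles $L_j$ corresponding to the $P_j$. By the standard description of strict Cayley polytopes in \cite[Section 3]{CaDR}, the conditions just established identify $X$ with the projective bundle ${\mathbb P}_Y(L_0 \oplus \cdots \oplus L_k)$ and $f: X \to Y$ with the bundle projection. Being a Zariski-locally trivial $\pn{k}$-bundle, every scheme-theoretic fiber of $f$ is reduced and isomorphic to $\pn{k}$, which gives the lemma. The most delicate point, in my view, is the determinantal analysis in the second step, and specifically extracting the exact equality $d_i = s$ (not merely $d_i \mid s$): this equality is precisely what rules out nontrivial multiplicities in the fibers and will be essential in the sequel.
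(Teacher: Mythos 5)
Your vertex-by-vertex analysis is, in substance, exactly the paper's own argument: the paper likewise identifies the $n$ edges of $P^s$ at a vertex lying on an invariant fiber (the $m$ edges of the relevant $P_i$ plus $k$ ``vertical'' edges joining corresponding vertices of the layers), forms the same block-triangular matrix, and factors its determinant as $\det(A_Y)\cdot a_1\cdots a_k$ to conclude simultaneously that each $P_i$ is smooth and that every vertical edge has lattice length exactly $s$ (your $d_i=s$). That part of your proposal is correct and is the combinatorial heart of the lemma.

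The gap is in your final step, which is where the actual content about \emph{scheme-theoretic} fibers lives. To get reducedness you invoke ``the standard description of strict Cayley polytopes'' in \cite[Section 3]{CaDR} to identify $X$ with $\pn{}_Y(L_0\oplus\cdots\oplus L_k)$. But the general Cayley/fibration correspondence (\cite[Lemma 3.6]{CaDR}, as quoted in this paper) only asserts that every fiber \emph{with its reduced structure} is isomorphic to $F$; reducedness is precisely what is not automatic and what the smoothness hypothesis must supply. Worse, in this paper the projective-bundle structure of $X$ is \emph{deduced from} this lemma in the proposition immediately following it, so as written your argument is essentially circular. The paper closes the gap with a short cycle-theoretic argument: every special fiber, as a $k$-cycle, is a combination of invariant fibers, all of which you have just shown to have edge length $s$; if some fiber were $tF$ with $t>1$, it would contain an invariant curve $C$ with $tC$ numerically equivalent to an invariant curve $C'$ of a general fiber, and $L\cdot C\ge s$ together with $L\cdot C'=s$ forces $t=1$. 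Alternatively, you could rescue your route by proving Zariski-local triviality directly: the dual bases of your edge bases show that every maximal cone of the normal fan of $P^s$ is the sum of a lift of a maximal cone of $\Sigma$ and a maximal cone of the fan of $\pn{k}$ sitting inside $\Lambda^\vee$, i.e.\ the fan splits --- but that verification is an additional step that must be written out; it is not contained in the determinant computation alone, nor in the citation you give.
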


\begin{proof} Let $F$ be an invariant  fiber of $f$, then, since it is not
general, it must be the fiber over a fixed point of $Y$. Equivalently, there is
a vertex $m$ of $\Cayley_{\Sigma}^s(P_0,\ldots,P_k)$ which is the intersection
of a $k$-dimensional face $Q$ such that $\pi(Q)=s\Delta_k$ and an
$(n-k)$-dimensional face $R$ such that $\pi(R)=e_i\in \real^k$. Hence $R=P_i$. 
Moreover, by construction, $Q$ is a simplex (possibly non standard) of edge
lengths $b_1,\ldots,b_k$, with $1\leq b_i\leq s$. Because
$\Cayley^s_\Sigma(P_0,\ldots,P_k)$ is smooth, the first lattice points 
$m_1,\ldots,m_n$ on the $n$ edges
meeting at $m$ form a lattice basis.  This is equivalent to asking that the
$n\times n$ matrix formed by taking the integral vectors 
$m_1-m,\ldots,m_n-m$ as columns, has determinant $\pm 1$.
After reordering we can assume that $e_0=0$. Then the corresponding matrix $A$
has the shape featured in Figure~4,
\begin{figure}\label{fig:4}
\begin{center}
 {}{}\scalebox{0.40}{\includegraphics{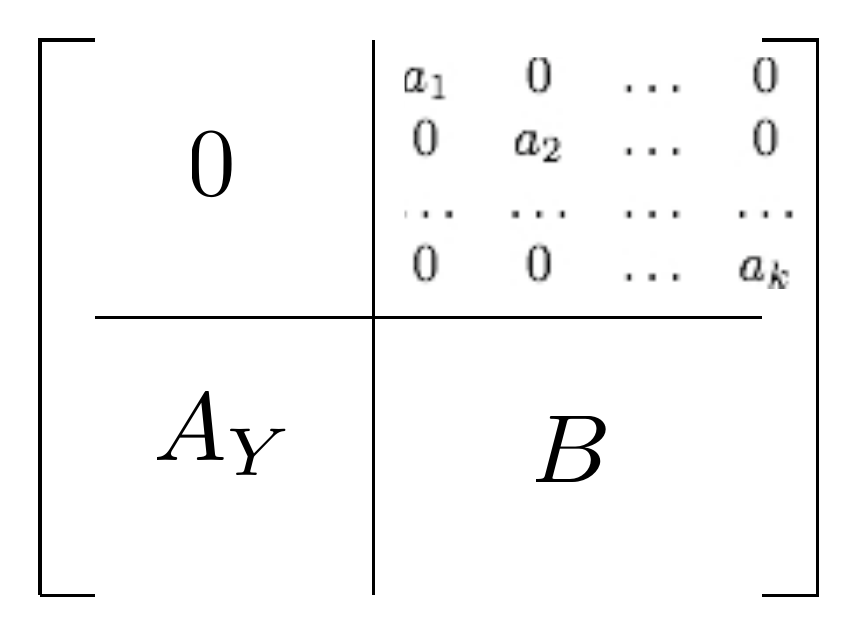}}
\caption{}\label{Matrix}
\end{center}
\end{figure} 
where  $a_i$, $1\leq a_i\leq s$, corresponds to the coordinate of the first
lattice point on the $i$th edge of the simplex $Q$. The matrix $A_Y$ is the
matrix given by the first lattice points through the (corresponding) vertex of
$P_0$. A standard computation in linear algebra gives
$\det(A)=\det(A_Y)a_1\cdots a_k$. It follows that $\det(A_Y)=1$ and $a_1=\cdots
=a_k=1$.

Because each vertex of $P_i$ is the intersection of a smooth fiber with $P_i$,
we conclude that $P_i$ is smooth for all $i=0,\ldots, k$. The equalities
$a_1=\cdots = a_k=1$ show that all invariant fibers have edge length $s$. Every 
special fiber (as a cycle) is a combination of invariant fibers. If there were a
non reduced fiber $tF$, then it should contain an invariant curve such that $tC$
is numerically equivalent to $C'$, where $C'$ is an invariant curve in 
$s\Delta$.  Then, because $L\cdot C\geq s$ and $L\cdot C'=s$, we would have
$t=1$. Lemma \ref{smooth} implies that the $P_i$ are smooth, and hence $Y$ is
smooth. One can see this also from the standard fact that for every morphism
with connected fibers between two normal toric varieties, the general fiber 
is necessarily toric \cite[Lemma 1.2]{DRS04}.
\end{proof}

Observe that the hypothesis that $P^s$ is smooth in Lemma~\ref{smooth} above,
is essential in order to prove that all fibers are reduced and embedded as Veronese varieties.

Toric projective
bundles are isomorphic to projectivized bundles of a vector bundle, which
necessarily splits as a sum of line bundles \cite[Lemma 1.1]{DRS04}. We will
denote the projectivized bundle by $\pn{}_Y(L_0\oplus\cdots \oplus L_k)$, where $Y$ is the
toric variety associated to $\Sigma$.

 \begin{proposition}
 Let  $P_0,\ldots,P_k\subset\real^m$ be  strictly combinatorially equivalent
polytopes such that 
  $$
P^s:=\Cayley_{\Sigma}^s(P_0,\ldots,P_k)$$
is smooth. Then  there are   line bundles $L_0,\cdots,L_k$ on $Y=X(\Sigma)$ such
that the toric variety 
$X(\Sigma_{P^s})$, defined by the inner normal fan $\Sigma_{P^s}$ of 
 $P^s$, is isomorphic to $\pn{}_Y(L_0\oplus \cdots \oplus L_k).$ 
 \end{proposition}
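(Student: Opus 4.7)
My plan is to combine Lemma \ref{smooth} with the splitting theorem for toric projective bundles cited as \cite[Lemma 1.1]{DRS04}.

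First, by the discussion preceding Lemma \ref{smooth}, the projection $\pi\colon M\to\Lambda$ equips $P^s$ with a polytope fibration structure whose fiber is $s\Delta_k$, and therefore induces a polarized toric fibration $f\colon X(\Sigma_{P^s})\to Y$ with $Y=X(\Sigma)$ and general fiber $\pn{k}$. By Lemma \ref{smooth}, the smoothness of $P^s$ ensures that every invariant fiber of $f$ is reduced and isomorphic to $\pn{k}$.

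Second, I would extract from the proof of Lemma \ref{smooth} a fan-theoretic statement: at each vertex $m$ of $P^s$, after reordering so that $e_0=0$, the matrix $A$ of primitive edge vectors is block upper-triangular with diagonal blocks $A_Y$ (an $m\times m$ matrix encoding the edges in the corresponding face $P_0$) and the diagonal matrix $\mathrm{diag}(a_1,\ldots,a_k)$ coming from the fiber edges; the smoothness of $P^s$ forces $\det(A_Y)=\pm 1$ and $a_1=\cdots=a_k=1$. Consequently, every maximal cone of the inner normal fan $\Sigma_{P^s}$ decomposes as the direct product of a maximal cone of $\Sigma$ and a maximal cone of the fan of $\pn{k}$. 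It follows that, for each maximal cone $\sigma\in\Sigma$ with affine patch $U_\sigma\subset Y$, the preimage $f^{-1}(U_\sigma)$ is equivariantly isomorphic to $U_\sigma\times\pn{k}$, so $f$ is a Zariski-locally trivial toric $\pn{k}$-bundle.

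Finally, applying the cited splitting theorem to the toric $\pn{k}$-bundle $f$ produces line bundles $L_0,\ldots,L_k$ on $Y$ with $X(\Sigma_{P^s})\cong\pn{}_Y(L_0\oplus\cdots\oplus L_k)$, as required. The main obstacle is the passage from \emph{all invariant fibers are $\pn{k}$} to Zariski-local triviality of $f$ as a bundle; this is resolved by the explicit product decomposition of maximal cones read off from the smoothness computation in Lemma \ref{smooth}. The line bundles $L_i$ need not be identified in the statement, but they are naturally the ones whose polytopes (up to translation) are the $P_i$, since the sections of the summand $L_i$ correspond to the face of $P^s$ given by $\pi^{-1}(se_i)$.
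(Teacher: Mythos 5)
Your proposal is correct and follows essentially the same route as the paper: invoke Lemma \ref{smooth} to see that $f$ is a $\pn{k}$-bundle and then apply the splitting result \cite[Lemma 1.1]{DRS04} quoted just before the proposition. The only difference is that you spell out the passage from ``all fibers are $\pn{k}$'' to Zariski-local triviality via the block structure of the edge matrix (note the cones are products only after a shearing automorphism absorbing the off-diagonal block), a step the paper's one-line proof leaves implicit.
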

\begin{proof} Denote by $L$ the ample line bundle on $X(\Sigma_{P^s})$
associated to the given polytope $P^s$. By Lemma \ref{smooth}, all
fibers are isomorphic to 
$\pn{k}$ and thus  $X(\Sigma_{P^s})$ has the structure of a projective bundle
over $Y$. Equivalently, $f_*(L)=L_0\oplus \cdots \oplus L_k$ and
$X(\Sigma_{P^s})\cong\pn{}_Y(L_0\oplus \cdots \oplus L_k)$, where  $f\colon
X(\Sigma_{P^s})\to Y$.
\end{proof}

A complete description of the geometry of such varieties when $s=1$ is contained
in \cite[Section 3]{DiR06} and \cite[1.1]{oda}.
\medskip

Throughout the rest of the section we will always assume that
$P^s=\Cayley^s_{\Sigma}(P_0,\ldots,P_k)$ is a smooth polytope. Denote as before
by $(X, L)$ the associated polarized toric variety. Let $f\colon X
\to Y$ be as above. The invariant curves on $X$ are of two types:
\begin{enumerate}
\item pullbacks $f^*V(\alpha_i)$ of invariant curves from $Y$,  corresponding to
the edges of $P_i$,
\item curves $V(\alpha_F)$ contained in a fiber $F$, corresponding to the edges
on simplices $s\Delta^k$.
\end{enumerate} 
Line bundles on $X$ can be written as $L=f^*(M)+a\xi $, where  $M$ is a line
bundle on $Y$ and $\xi$ is the tautological line bundle on $\pn{}_Y(L_0\oplus
\ldots \oplus L_k).$  

Because a line bundle on a nonsingular toric variety is spanned, respectively
(very) ample,  if and only if the intersection with all the invariant curves is
non negative, respectively positive, there is a well understood spannedness and
ampleness criterion, see \cite[Prop.~2]{DiR06}.

\begin{lemma}\label{ample} Let  $L=f^*(M)+a\xi $ be a line bundle on 
$\pn{}_Y(L_0\oplus\cdots
\oplus L_k )$. Then for every curve of type $f^*V(\alpha_i)$, we have $L\cdot
f^*V(\alpha_i)=(M+sL_i)\cdot V(\alpha_i)$, and for every curve $V(\alpha_F)$ we have
$V(\alpha_F)\cdot L=a$. Consequently
\begin{enumerate}
\item[{\rm (1)}]  $L$ is ample if and only if $a\geq 1$ and $M+sL_i$ is ample 
for all $i$,
\item[{\rm (2)}]  $L$ is spanned if and only if $a\geq 0$ and  $M+sL_i$ are
spanned  for all $i$.
 \end{enumerate}
\end{lemma}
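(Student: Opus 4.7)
The strategy is to reduce the lemma to the toric criterion recalled just before the statement, namely that a line bundle on a smooth complete toric variety is ample (respectively globally generated) if and only if its intersection with every torus-invariant curve is positive (respectively nonnegative). It therefore suffices to compute $L\cdot C$ for each of the two families of invariant curves on $X=\pn{}_Y(L_0\oplus\cdots\oplus L_k)$ and then to translate the resulting numerical conditions into (1) and (2).

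For a curve $V(\alpha_F)$ lying inside a torus-fixed fiber $F\cong \pn k$, restriction to $F$ gives $f^*M|_F=\cal O_F$ and $\xi|_F=\cal O_{\pn k}(1)$, hence $L|_F=\cal O_{\pn k}(a)$; since $V(\alpha_F)$ is an invariant coordinate line in $\pn k$ we immediately get $L\cdot V(\alpha_F)=a$. For a section-type curve $f^*V(\alpha_i)$, interpreted as the image of the invariant curve $V(\alpha_i)\subset Y$ under the invariant section $\sigma_i\colon Y\hookrightarrow X$ corresponding to the $i$-th summand $L_i$, the projection formula yields
\[
L\cdot f^*V(\alpha_i)\;=\;\sigma_i^*L\cdot V(\alpha_i)\;=\;(M+sL_i)\cdot V(\alpha_i),
\]
once one uses $\sigma_i^*(f^*M)=M$ together with the identification of $\sigma_i^*\xi$ coming from the projective bundle presentation of the preceding proposition, normalized so that restricting the Cayley line bundle along $\sigma_i$ recovers the line bundle on $Y$ associated to $P_i$, namely $M+sL_i$.

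Given these two intersection formulas, items (1) and (2) follow at once. Ampleness of $L$ is equivalent, by the toric criterion, to positivity against every invariant curve: the fiber curves force $a\geq 1$, and the section curves force $(M+sL_i)\cdot V(\alpha)>0$ for every $i$ and every invariant curve $V(\alpha)\subset Y$. Because the polytopes $P_0,\ldots,P_k$ share the common normal fan $\Sigma$, the curves $V(\alpha_i)$ do exhaust the invariant curves of $Y$ as $\alpha_i$ varies, so by the same toric criterion applied on $Y$ this condition is equivalent to ampleness of $M+sL_i$ for every $i$. Item (2) is obtained by replacing every strict inequality with the corresponding non-strict one.

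The only delicate step is the section-type intersection formula: one has to pin down the normalization of $\xi$ and of the splitting $L_0\oplus\cdots\oplus L_k$ so that $\sigma_i^*\xi$ becomes $L_i$ and the Cayley line bundle takes the form $f^*M+s\xi$. This bookkeeping is dictated by the projective bundle structure established in the preceding proposition; once it is in place, everything else is a mechanical application of the toric ampleness/spannedness criterion.
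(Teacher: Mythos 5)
Your proposal is correct and takes essentially the same route as the paper, which offers no argument for Lemma~\ref{ample} beyond invoking the toric criterion that a line bundle on a complete nonsingular toric variety is spanned (resp.\ ample) iff it meets every invariant curve nonnegatively (resp.\ positively), together with the reference \cite[Prop.~2]{DiR06}; your computation of the two intersection numbers via restriction to a fiber and via the invariant sections $\sigma_i$ is exactly the intended content. The one caveat is the normalization issue you yourself flag: with the natural convention $\sigma_i^*\xi=L_i$ (so that the Cayley bundle $s\xi$ restricts to $sL_i$ on the $i$th section), the projection formula gives $L\cdot f^*V(\alpha_i)=(M+aL_i)\cdot V(\alpha_i)$ rather than $(M+sL_i)\cdot V(\alpha_i)$ for a general $L=f^*M+a\xi$ --- a discrepancy that appears to originate in the statement of the lemma itself (the two expressions agree exactly when $a=s$) and which does not affect the way the lemma is used in the proof of Proposition~\ref{localsplit}.
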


Because the fibers of $\pi$ correspond to simplices $s\Delta_k,$ and thus are
embedded as $s$-Veronese varieties, and because the line bundles $L_i$ are ample, we see that
$P^s=\Cayley^s_{\Sigma}(P_0,\ldots,P_k)$ corresponds to the toric embedding $$
(\pn{}_Y(L_0\oplus\cdots \oplus L_k ), s\xi).$$

%%%%%%%revision starts here%%%%%%%%%%%%%
\begin{proposition}\label{localsplit}
 Let $P^s =\Cayley^s_{\Sigma}(P_0,\ldots,P_k)$ be a smooth generalized strict
Cayley polytope. Assume that $\dim P_i + 1 < \frac{k+1}s$ for all $i$. 
 Then $P^s$ is $\rat$-normal, and
 $$
\textstyle\codeg_\rat(P^s) = \tau(P^s)=\frac{k+1}{s}.$$
\end{proposition}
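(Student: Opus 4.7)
The plan is to exploit the projective-bundle identification $(X,L)\cong(\pn{}_Y(L_0\oplus\cdots\oplus L_k),s\xi)$ with $f\colon X\to Y$ established above, and to invoke Mustata's spannedness criterion \cite[Cor.~4.2]{Mustata} already used in the proof of Corollary~\ref{n+1}. The first step is the canonical bundle formula for a projective bundle,
\[
K_X=f^*(K_Y+L_0+\cdots+L_k)-(k+1)\xi,
\]
which, since $L=s\xi$, gives
\[
K_X+tL=f^*(K_Y+\textstyle\sum_iL_i)+(ts-(k+1))\xi
\]
for every $t\in\rat_{>0}$.

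The lower bounds are immediate. For $t<\frac{k+1}{s}$ the intersection of $K_X+tL$ with a line in a fiber of $f$ equals $ts-(k+1)<0$, so $K_X+tL$ is not nef and $\tau(P^s)\ge\frac{k+1}{s}$. On the polytope side, the $k+1$ vertical facets of $aP^s$ that come from the base simplex $as\Delta_k$ are cut out by $h_j\ge 0$ ($j=1,\dots,k$) and $\sum h_j\le as$; after shrinking by $b$ these become $h_j\ge b$ and $\sum h_j\le as-b$, whose joint feasibility forces $(k+1)b\le as$, so $\codeg_\rat(P^s)\ge\frac{k+1}{s}$.

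For the matching upper bound I would verify that $K_Y+H$ is spanned on $Y$ for $H:=\sum_i L_i$ by applying Mustata's theorem. Each $L_i$ is ample, so $H\cdot C\ge k+1$ on every invariant curve $C\subset Y$, while the hypothesis $\dim P_i+1<\frac{k+1}{s}$ gives $k+1>s(m+1)>m$, where $m=\dim Y=\dim P_i$. The exceptional case $(Y,H)=(\pn{m},\mathcal{O}_{\pn{m}}(m))$ is impossible: writing each $L_i=\mathcal{O}(d_i)$ with $d_i\ge 1$ would force $\sum_{i=0}^k d_i=m<k+1$, contradicting $d_i\ge 1$. Thus $K_Y+H$ is spanned, in particular nef and effective.

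Pulling back then gives $K_X+\frac{k+1}{s}L=f^*(K_Y+H)$ nef, hence $\tau(P^s)\le\frac{k+1}{s}$; and the identification
\[
H^0(X,sK_X+(k+1)L)=H^0(Y,s(K_Y+H))\ne 0
\]
shows that $((k+1)P^s)^{(s)}$ contains a lattice point, so $\codeg_\rat(P^s)\le\frac{k+1}{s}$. Combining with the lower bounds yields the desired equalities $\tau(P^s)=\codeg_\rat(P^s)=\frac{k+1}{s}$ and the $\rat$-normality of $P^s$. The main obstacle is verifying Mustata's numerical hypothesis and simultaneously ruling out the projective-space exception; the precise inequality $\dim P_i+1<\frac{k+1}{s}$ is exactly what buys both.
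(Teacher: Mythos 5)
Your argument is correct and follows essentially the same route as the paper's proof: both use the identification $(X,L)\cong(\pn{}_Y(L_0\oplus\cdots\oplus L_k),s\xi)$, the canonical bundle formula $K_X=f^*(K_Y+\sum_iL_i)-(k+1)\xi$, Musta\c{t}\u{a}'s criterion (with the hypothesis $\dim P_i+1<\frac{k+1}{s}$ guaranteeing positivity of $K_Y+\sum_iL_i$) for the upper bound on $\tau$, and the projection of $(aP^s)^{(b)}$ onto $(as\Delta_k)^{(b)}$ for the lower bound on $\codeg_\rat$. The only cosmetic differences are that the paper deduces both bounds at once from the fiberwise spannedness/ampleness criterion of Lemma~\ref{ample} (citing the ampleness part of Musta\c{t}\u{a}'s corollary, so no exceptional case arises), and gets the codegree upper bound from $\codeg_\rat\le\tau$ rather than from a direct section count.
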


\begin{proof} 
Recall that
$$X:=X(P^s)=\pn{}_Y(L_0\oplus\cdots \oplus  L_k),$$
where $L_i$ is ample on $Y:=X(\Sigma)$, for $i=0,\ldots,k$, and $P^s$ is the 
polytope defined by the line bundle $s\xi$, where $\xi:=\xi_{L_0\oplus\cdots \oplus  L_k}$.

Recall also that the canonical line bundle on $X$ is $K_X=\pi^*(K_Y+L_0+\cdots
+L_k)-(k+1)\xi$, where $\pi\colon X\to Y$. It follows that 
$$H:=
bK_X+as\xi=\pi^*(b(K_Y+L_0+\cdots +L_k))+(as-b(k+1))\xi.$$
By Lemma \ref{ample}, $H$ is spanned (resp. ample) if and only if $as-b(k+1)\geq 0$ (resp. $\geq 1$) and 
$b(K_Y+L_0+\cdots +L_k)+sL_i$ is spanned (resp. ample).

Observe that, because $L_i$ is ample for each $i$, 
$K_Y+L_0+\cdots +L_k$ is ample if $k+1> \dim P_i +1$ \cite[Cor.~4.2 (ii)]{Mustata}, which holds by assumption. It follows that, with the given hypotheses, we have 
\begin{itemize}
\item[(i)]$H$ is spanned if and only if $\frac{a}b\ge \frac{k+1}s$
\item[(ii)] $H$ is ample if and only if $\frac{a}b> \frac{k+1}s$,
\end{itemize}
and thus $\tau(P)=\frac{k+1}s$.

Consider the projection map $\pi\colon \real^n\to \real^k$ such that $\pi(P)=s\Delta_k$. Clearly, if $a,b$ are such that $(aP)^{(b)}\neq \emptyset$, then for all points $m\in (aP)^{(b)}$, $\pi(m)\in (as\Delta_k)^{(b)}$. This implies that $\frac{a}b\ge \codeg_\rat(s\Delta_k)=\frac{k+1}s$, and hence $\codeg_\rat(P)\ge \frac{k+1}s$. Together with the inequality $\codeg_\rat(P)\le \tau(P)=\frac{k+1}s$, this proves
$$\textstyle\codeg_\rat(P)= \tau(P)=\frac{k+1}s.$$
\end{proof}

\begin{rem} \label{rmk:Haase}
Christian Haase showed us the following beautiful geometric argument to prove Proposition~\ref{localsplit}.
As before, denote by  $\Sigma$  the common normal fan of $P_0, \dots, P_k$ and by
$\Sigma(P^s)$ the normal fan of $P^s$. Consider the smooth toric varieties $X = X(\Sigma(P^s))$ and $ Y=X(\Sigma)$, and let
$L$ denote the ample line bundle on $X$ with associated polytope $P^s$ and  $L_0, \dots, L_k$
the ample line bundles on $Y$ with associated polytopes $P_0, \dots, P_k$.

As $\tau(s \Delta_k) = \frac{k+1}{s} > \tau(P_i)$, we have that an integer multiple of
 $L_i + \frac s {k+1} K_Y$ is ample on $Y$ for all $i=0, \ldots,k$.
Let $\rho_1, \ldots, \rho_\ell$ be the primitive
 generators of the one dimensional cones in $\Sigma$, so that
 \[ P_i \, = \, \cap_{j=1}^\ell H^+_{\rho_j, - a_{ij}}. \]
Then the polytope 
\[  \cap_{j=1}^\ell H^+_{\rho_j, - a_{ij}+ \frac{s} {k+1}}, \]
corresponding to the line bundle (with coefficients in $\rat$) $L_i + \frac s {k+1} K_Y$, is combinatorially equivalent
to $P_i$,  for all $i=0, \ldots, k$. On the other hand, the polytope
$$\textstyle\{ x=(x_1,\ldots,x_k) \in s \Delta_k\,|\, \sum_{i=1}^k x_i \le s - \frac s {k+1}, \, x_i  \ge \frac s {k+1}, \, \forall \, i=1, \ldots, k \}$$
 equals the barycenter $v$ of the simplex $s \Delta_k$. 
 Hence the polytope associated to $L + \frac{s} {k+1} K_{X}$ reduces to the fiber over
$v$ in $P^s$,  which can be identified with $\frac 1 {k+1} (P_0 + \cdots +P_k)$, 
and no multiple of the corresponding line bundle is ample. 

For any rational number $\frac a b > \frac {k+1} s$, we have again that  the polytope
\[  \cap_{j=1}^\ell H^+_{\rho_j, - a_{ij}+ \frac{b} a} \]
 is combinatorially equivalent to $P_i$ for all $i=0, \ldots, k$.  But now the polytope given by
 the points in $s \Delta_k$ at lattice distance $\frac b {a}$ from each of its facets is also
combinatorially equivalent to $s \Delta_k$, and therefore the polytope corresponding to
$L + \frac{b} {a} K_{X}$ is combinatorially
equivalent to $P^s$. We conclude that $\tau(P^s) = \frac{k+1} s$, as wanted. 
\end{rem}

\section{Classifying smooth lattice polytopes with high codegree}
\label{sec:class}

We now use the results in the previous sections to give the proof of The\-o\-rem
\ref{defective}.
\medskip

\noindent {\bf Proof of Theorem~\ref{defective}.}  Let $(X,L)$ be the nonsingular toric variety 
and ample line bundle associated to $P$. 

Assume (1) holds. Because $P$ is $\rat$-normal, Lemma \ref{nefmap} implies that the nef value map $\varphi:=\varphi_{P}$ is
not birational. 
Set $\tau:=\tau(P)$.  Lemma~\ref{lem:key}
gives that $\tau > \codeg(P)-1\ge \frac{n+1}2$. If $n$ is even and $P=
\Delta_{\frac{n}{2}}\times  \Delta_{\frac{n}{2}}$, then 
$\codeg(P)=\frac{n}{2}+1< \frac{n+3}2$. Therefore, Lemma \ref{line} 
implies that $\tau$ is an integer and  $\tau \ge \codeg(P)\ge \frac{n+3}2$, and
that  
$\varphi\colon X\to Y$ is a (non birational) contraction of an
extremal ray in the nef cone $NE(X)$ of $X$. 

By \cite[Cor.~2.5, p.~404]{Reid83}, we know that $\varphi$ is flat, $Y$ is a
smooth toric variety, and, since $X$ is smooth, a general fiber $F$ is
isomorphic to $\pn{k}$, where $k=\dim X - \dim Y$. Under this isomorphism, $L|_F
={\mathcal O}_{\pn{k}}(s)$, for some positive integer $s$. Let $\ell\subseteq F\cong
\pn{k}$ be a line. Then, since $F$, and hence $\ell$, is contracted by
$\varphi$, we have 
\[
0=(K_X+\tau L)\cdot \ell=K_X\cdot \ell+\tau L\cdot \ell = K_F\cdot \ell+\tau
s=-(k+1)+\tau s.
\]
We therefore get
\begin{equation*}\label{ineq}
\textstyle \frac{n+1}2< \tau = \frac{k+1}s \le \frac{n+1}{s},
\end{equation*} which gives $s=1$ and $\tau=\codeg_\rat(P) =k+1 \in \zed$. 
By Lemma~\ref{lem:key}, we have $\tau\ge \codeg
(P)$. 
As $\codeg(P)\ge\codeg_\rat(P)$,  we get $\codeg(P)=k+1$. 
Hence $k+1\ge \frac{n+3}2$, so that $k> \frac{n}2$.  

Since $L^k\cdot F=1$ for a general fiber of $\varphi$ and $\varphi$ is flat, $L^k\cdot Z=1$ for every fiber $Z$. Therefore all fibers are irreducible,
reduced, and of degree one in the corresponding embedding. It follows that for
every fiber $Z,$ $(Z,L|_Z)\cong (\pn{k},{\mathcal O}_{\pn{k}}(1)).$ Therefore 
$\varphi$ is a fiber bundle: $X=\pn{}_Y(\varphi_*L)$, where $\varphi_*L$ is a rank
$k+1$ vector bundle. Since $Y$ is toric, this bundle splits as a sum of line bundles 
$\varphi_*L=L_0 \oplus \ldots \oplus L_k$, and therefore $P$ is a strict Cayley polytope. 
Hence (1) implies (2).

Assume (2) holds. By Proposition \ref{localsplit} (with $s=1$), $P$ is $\rat$-normal, and $\codeg_\rat (P)=\tau=k+1$. Since $\codeg_\rat (P)$ is an integer, $\codeg(P)=\codeg_\rat (P)$, hence $\codeg(P)=k+1>\frac{n}2+1$, so $\codeg(P)\ge \frac{n+3}2$. Therefore (1) holds.

The equivalence of (2) and (3) is essentially contained in \cite{DiR06}; here is a brief sketch of the proof.

 Assume (2) holds. %We saw above that then $\codeg(P)=\codeg_\rat (P)=\tau$.
Since $k>\frac{n}2$, $(X,L)$ is defective with defect $\delta\ = 2k-n$ \cite[Prop.~5.12, p.~369]{LS86} hence $\delta=2\codeg(P)-2-n$, since $\codeg(P)=k+1$.
%(according to Ein, this was first observed by A. Landman and M. Reid, see \cite[p.~895]%{Ein2}). 
%For any nonsingular variety with positive dual defect $\delta$, the
%nef value is  $\tau = \frac{n+\delta}{2}+1$ (see  \cite[Cor.~6.6.8,
%p.~151]{BESO}). We get $\delta=2\tau-2-n=2\codeg (P)-2-n$, which shows that (3) holds.

Assume (3) holds, so that $P$ is defective with defect $\delta=2\codeg (P)-2-n\ge 1$. By \cite[Thm. 2]{DiR06}, then $P=\Cayley_\Sigma(P_0,\ldots,P_k)$ is a smooth strict Cayley polytope, with $k=\frac{n+\delta}2>\frac{n}2$ and $\codeg (P)=\frac{n+\delta}2+1=k+1$. 
\hfill $\square$
\medskip

We isolate the following result from the statement and proof of the previous
theorem. 

\begin{corollary} Let $P$ be a smooth $\rat$-normal lattice polytope  of
dimension $n$, and assume $\codeg(P) \ge \frac{n+3} 2$. Then, $\tau:=\tau(P)=
\codeg(P)$ is an integer and the associated polarized toric variety 
$(X,L)$ is defective with defect $\delta = 2 \tau -2 -n$.
\end{corollary}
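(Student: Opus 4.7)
The plan is to obtain this corollary as a direct repackaging of the content of Theorem~\ref{defective} and the intermediate facts established in its proof. The hypotheses on $P$ (smooth, $\rat$-normal, $\codeg(P) \ge \frac{n+3}{2}$) are exactly condition (1) of Theorem~\ref{defective}, so both (2) and (3) become available at once. The only thing to do is read off the numerical data from those two conditions and check that they match the statement of the corollary.

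First I would invoke the equivalence (1) $\Leftrightarrow$ (2): $P = \Cayley_{\Sigma}(P_0, \ldots, P_k)$ with $k+1 = \codeg(P)$. In the proof of (1) $\Rightarrow$ (2) it was shown, via Lemmas~\ref{lem:key} and~\ref{line}, that $\tau(P)$ is an integer, and via the explicit computation with a line $\ell$ in a general fiber $F \cong \pn{k}$ that $\tau(P) = k+1$. Combining these with $k+1 = \codeg(P)$ gives immediately the first conclusion: $\tau(P) = \codeg(P) \in \zed$.

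Next, for the defect, I would invoke the equivalence (1) $\Leftrightarrow$ (3) of Theorem~\ref{defective}: the polarized toric variety $(X, L)$ is defective with defect $\delta = 2\codeg(P) - 2 - n$. Substituting the identity $\tau(P) = \codeg(P)$ established in the previous step yields $\delta = 2\tau - 2 - n$, as required.

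There is no real obstacle here; the work has all been done inside Theorem~\ref{defective}, and this corollary simply isolates the numerical equalities $\tau = \codeg(P) \in \zed$ and $\delta = 2\tau - 2 - n$ that were produced along the way. The only thing I would be careful about is making explicit that the integrality of $\tau$ and the equality $\tau = \codeg(P)$ were actually proved inside the argument for (1) $\Rightarrow$ (2), rather than merely being part of the statement of condition (2) itself.
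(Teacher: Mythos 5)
Your proposal is correct and follows exactly the paper's route: the paper explicitly states that this corollary is ``isolated from the statement and proof of the previous theorem,'' and you correctly identify that the integrality of $\tau$ and the equality $\tau=\codeg(P)=k+1$ come from the argument for $(1)\Rightarrow(2)$ (via Lemmas~\ref{lem:key} and~\ref{line} and the computation on a line in a general fiber), while the defect formula comes from $(1)\Rightarrow(3)$.
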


The classification of {\em smooth  $\rat$-normal}  lattice polytopes of degree $0$ and $1$
follows from Theorem \ref{defective} (cf. \cite{Ba-Ni} for the general case).

\begin{corollary} Assume $P$ is a smooth, $\rat$-normal $n$-dimensional lattice polytope. Then
\begin{itemize}
\item[(1)] $\deg (P)=0$ if and only if $P=\Delta_n$,
\item[(2)] $\deg(P)=1$ if and only if $P={\rm Cayley}(P_0,\ldots,P_{n-1})$ is a
Lawrence prism (the $P_i$ are segments) or $P=2\Delta_2$.
\end{itemize}
\end{corollary}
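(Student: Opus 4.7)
The strategy is to translate the degree condition into one on the codegree, then invoke the tools of Sections~\ref{sec:nef}--\ref{sec:class}. Part (1) is immediate: $\deg(P)=0$ is equivalent to $\codeg(P)=n+1$, which by Corollary~\ref{n+1}(1) holds if and only if $P=\Delta_n$.

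For part (2), I rewrite $\deg(P)=1$ as $\codeg(P)=n$. When $n\ge 3$, the inequality $n\ge\frac{n+3}{2}$ places us in the range of Theorem~\ref{defective}, which yields $P=\Cayley_\Sigma(P_0,\ldots,P_k)$ with $k+1=n$. A dimension count $\dim P=\dim P_i+k$ then forces each $P_i$ to be one-dimensional, hence a segment, so $P$ is a Lawrence prism. Conversely, Proposition~\ref{localsplit} applied with $s=1$ and $\dim P_i+1=2<n=k+1$ gives $\codeg(P)=n$ for any such prism, while Example~\ref{simplex} handles $2\Delta_2$.

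The case $n=1$ is trivial. The delicate case is $n=2$, where $\codeg(P)=2<\frac{n+3}{2}$ lies just outside the reach of Theorem~\ref{defective}. By $\rat$-normality and Lemma~\ref{lem:key}, $\tau(P)=\codeg_\rat(P)\in(1,2]$. If $\tau(P)=2$, then $\tau(P)>\frac{n+1}{2}=\frac{3}{2}$, so Lemmas~\ref{line} and~\ref{nefmap} allow me to re-run the argument of Theorem~\ref{defective} verbatim and conclude that $P$ is a strict Cayley polytope of two segments (or $\Delta_1\times\Delta_1$, itself a Lawrence prism). If instead $1<\tau(P)<2$, Mori's cone bound $-K_X\cdot C\le n+1=3$ applied to an extremal curve $C$, combined with $L\cdot C\ge 1$, forces $L\cdot C=2$, $-K_X\cdot C=3$, and hence $\tau(P)=3/2$. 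A brief case analysis on smooth toric surfaces --- using that $\varphi_P$ is non-birational by Lemma~\ref{nefmap}, and that a non-trivial Hirzebruch-type contraction would yield integer $\tau$ by Remark~\ref{integer} --- rules out everything except $(X,L)=(\pn{2},\cal{O}_{\pn{2}}(2))$, so $P=2\Delta_2$.

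The main obstacle is this last surface analysis: one must carefully eliminate every smooth toric surface other than $\pn{2}$ as a candidate for $\tau=3/2$, which ultimately reduces to a divisibility obstruction for $-K_X$ in $\mathrm{Pic}(X)$ (only on $\pn{2}$ is $-K_X$ divisible by $3$ in a way compatible with $-2K_X\sim 3L$ for $L$ ample).
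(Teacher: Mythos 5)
Your plan is correct, and for part (1) and for part (2) with $n\ge 3$ it travels the same road as the paper (Lemma \ref{line} plus Theorem \ref{defective}); the genuine divergence is the case $n=2$. There the paper drops the adjunction machinery entirely and classifies smooth lattice polygons with no interior lattice point by hand: smoothness at a vertex gives local coordinates in which the vertex is $(0,0)$, the point $(1,1)$ cannot be interior, and a three-way case analysis on the lengths of the two edges at that vertex yields the Cayley polytope of two segments, $\Delta_1\times\Delta_1$, or $2\Delta_2$. You instead stay inside Mori theory: split on $\tau(P)=2$ versus $\tau(P)\in(1,2)$, use the cone bound $-K_X\cdot C\le 3$ together with non-birationality (Lemma \ref{nefmap}) to force $\tau=3/2$ in the latter case, and then appeal to the classification of non-birational extremal contractions on smooth surfaces (a length-$2$ ray gives a $\pn{1}$-fibration, whose nef value is $2/(L\cdot F)$ and hence never lies in $(1,2)$; a length-$3$ ray forces $X\cong\pn{2}$), landing on $(\pn{2},\mathcal{O}_{\pn{2}}(2))$. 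Both routes work; the paper's is more elementary and self-contained, while yours is more uniform with the rest of the paper and makes visible exactly where $2\Delta_2$ enters (as the unique non-integral nef value in dimension $2$). Two small points to tighten. First, in part (1) the backward implication does not follow from Corollary \ref{n+1}(1) as literally stated (which assumes all three invariants equal $n+1$); you need Corollary \ref{n+1}(2) combined with Lemma \ref{lem:key} to convert $\tau(P)\le n$ for $P\ne\Delta_n$ into $\codeg(P)\le n$, or else rerun the Lemma \ref{line} argument as the paper does. Second, your converse for Lawrence prisms via Proposition \ref{localsplit} requires $\dim P_i+1<k+1$, i.e.\ $n\ge 3$, so the planar trapezoids must be checked directly (which is immediate, since they have lattice width one).
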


\begin{proof} (1) Because  $\tau(P)> {\rm codeg}(P)-1=n\ge \frac{n+1}2,$ Lemma
\ref{line}Ê implies that  $\varphi_P$ contracts a line. It follows that 
$\tau(P)$ is an integer, $\tau(P)\ge n+1$ and thus 
$\tau(P)={\rm codeg}(P)=n+1$. Theorem \ref{defective} implies that $P$ is a
strict Cayley polytope with $k=n$. This is equivalent to  $P=\Delta_n$.

(2) For $n\ge 3$ the same argument as in (1) applies and gives that $P$ is
Cayley with $k=n-1$. 
Assume $n=2.$ Since $\codeg(P)=2$, $P\neq \Delta_{2}$ and $P$ has no interior
lattice points. Let $m$ be a vertex of $P$. Because $P$ is smooth, the first lattice
points on the two edges 
containing $m(\sigma)$ form a lattice basis. In this basis $m(\sigma)=(0,0)$,
and the lattice point $(1,1)$ is not an interior point of $P$. Therefore there
are only three possibilities: (i) One of the edges has length $>1$, the other
edge has length $1$, 
and the point $(1,1)$ is on a third edge parallel to $\rho_1$, which gives a
strict Cayley polytope. (ii) Both edges have length $1$, and $(1,1)$ is the
fourth vertex, hence 
$P=\Delta_{1}\times\Delta_{1}=\Cayley_{\Sigma(\Delta_1)}(\Delta_1,\Delta_1)$.
(iii) Both edges 
have length $2$, and there is only one other edge containing $(1,1)$. In this
case $P=2\Delta_2.$
\end{proof}

We close the paper with some loose ends. In the Introduction we 
conjectured
that any smooth lattice polytope $P$ with $\codeg(P) \ge
\frac{n+3}{2}$ is in fact $\rat$-normal. This conjecture is supported by   
\cite[7.1.8]{BESO}, which suggests 
that for $n \le 7$, if $P$ is a smooth lattice polytope and
$\codeg_\rat(P)>\frac{n}{2}$, then $P$ is $\rat$-normal. We also expect that
without any smoothness assumptions, all lattice 
polytopes $P$ with $\codeg(P) \ge \frac{n+3}2$ are indeed Cayley polytopes. 

\begin{ack} We thank the Institute for Mathematics and its Applications (IMA),
Minneapolis, where this project began. We also thank the Department of
Mathematics  of KTH and the G\"oran Gustafsson Foundation, as well as the Center of Mathematics for Applications (CMA) and
Department of Mathematics of the University of Oslo, for sponsoring several
visits, during which this paper was completed. We are grateful to Eva Maria 
Feichtner and Bernd Sturmfels for calling our attention to the paper
\cite{Ba-Ni}, and to Eduardo Cattani for useful discussions.  We would also like to thank Christian Haase and Benjamin Nill for very interesting discussions, and for showing us the alternative proof in Remark \ref{rmk:Haase}. Finally, we would like to thank the referee for thoughtful and most helpful comments.
\end{ack}


\begin{thebibliography}{00}

\bibitem%[BN07]
{Ba-Ni} Victor~V. Batyrev and Benjamin Nill.
\newblock Multiples of lattice polytopes without interior lattice points.
\newblock {\em Moscow Math. J.}, {\bf 7}(2):195--207, 2007.

\bibitem%[BS92]
{BESO92}Mauro~C. Beltrametti and Andrew~J. Sommese.
\newblock  On the adjunction theoretic classification of polarized varieties.
\newblock {\em J. reine angew. Math.} {\bf 427}:157--192, 1992.

\bibitem%[BS95]
{BESO} Mauro~C. Beltrametti and Andrew~J. Sommese.
\newblock {\em The adjunction theory of complex projective varieties},
 de Gruyter Expositions in Mathematics, Vol.~16.
\newblock Walter de Gruyter \& Co., Berlin, 1995.

\bibitem%[BSW92]
{BSW} Mauro~C. Beltrametti, Andrew~J. Sommese, and Jaros\l aw A.
Wi\'sniewski.
\newblock Results on varieties with many lines and their applications to
 adjunction theory.
 \newblock In \emph{Complex algebraic varieties (Bayreuth, 1990)}, 
 pp. 16--38. Lecture Notes in Math., 1507, Springer, Berlin,  1992.

\bibitem%[CaDR08]
{CaDR} Cinzia Casagrande and Sandra Di~Rocco.
\newblock Projective Q-factorial toric varieties covered by lines.
\newblock {\em Comm. Contemp. Math.}, {\bf 10}(3):363--389,
2008.

\bibitem%[CKM88]
{CKM88} Herbert Clemens, J\'anos Koll\`ar, and Shigefumi Mori.
\newblock Higher-dimensional  complex geometry. 
\newblock Ast\'erisque No. {\bf 166}, 1988.
%, 144 pp. 

\bibitem%[DR06]
{DiR06} Sandra Di~Rocco.
\newblock Projective duality of toric manifolds and defect polytopes.
\newblock {\em Proc. London Math. Soc. (3)}, {\bf 93}(1):85--104, 2006.

\bibitem%[DRS04]
{DRS04}Sandra Di Rocco and Andrew J. Sommese.
\newblock Chern numbers of ample vector bundles on toric surfaces.
\newblock  {\em Trans. Amer. Math. Soc.}, {\bf 356}(2):587--598, 2004.

%\bibitem%[Ein86]
%{Ein86} Lawrence Ein.
%\newblock Varieties with small dual varieties. I.
%\newblock \emph{Invent. Math.}, {\bf 86}(1):63--74, 1986.

%\bibitem%[Ein2]
%{Ein2} Lawrence Ein.
%\newblock Varieties with small dual varieties. II.
%\newblock \emph{Duke Math. J.}, {\bf 52}(4):895--907, 1985.

\bibitem%[Kaw84]
{kaw} Yujiro Kawamata.
\newblock The cone of curves of algebraic varieties.
\newblock {\em Ann. of Math. (2)}, {\bf 119}(3):603--633, 1984.

\bibitem%[HNP08]
{hnp} Christian Haase, Benjamin Nill, and Sam Payne.
\newblock Cayley decompositions of lattice polytopes and upper bounds for
$h^*$-polynomials.
\newblock Preprint, arXiv:0804.3667, 2008. (To appear in J. reine angew. Math.)

\bibitem%[LS86]
{LS86} Antonio Lanteri and Daniele Struppa.
\newblock Projective manifolds whose topology is strongly reflected in their hyperplane sections.
\newblock {\em Geom. Dedicata}, {\bf 21}(3):357--374, 1986.

\bibitem%[Mus02]
{Mustata} Mircea Musta{\c{t}}{\u{a}}.
\newblock Vanishing theorems on toric varieties.
\newblock {\em Tohoku Math. J. (2)}, {\bf 54}(3):451--470, 2002.

\bibitem%[Oda88]
{oda} Tadao Oda.
     \newblock {\em Convex bodies and algebraic geometry}.
\newblock Ergebnisse der Mathematik und ihrer Grenzgebiete (3), Vol. 15,
Springer-Verlag, Berlin 1988.
      
\bibitem%[Reid83]
{Reid83} Miles Reid. 
\newblock Decomposition of toric morphisms.
\newblock In \emph{Arithmetic and geometry}, Vol. II, pp.
 395--418, Progr. Math., 36, Birkh\"auser Boston, Boston, MA,  1983. 
 
 \bibitem%[Z95]
 {Z95}  G\"unter M. Ziegler.
\newblock \emph{Lectures on polytopes.}  
\newblock Graduate Texts in Mathematics, 152. Springer-Verlag, New York, 1995.

\end{thebibliography}
\end{document}